\theoremstyle{plain}
\newtheorem{theorem}{Theorem}[section]
\newtheorem{lemma}[theorem]{Lemma}
\newtheorem{corollary}[theorem]{Corollary}
\theoremstyle{definition}
\newtheorem{definition}[theorem]{Definition}
\newtheorem{example}[theorem]{Example}
\theoremstyle{remark}
\newtheorem{remark}[theorem]{Remark}
\numberwithin{equation}{section}
\newcommand{\R}{\mathbb{R}}
\newcommand{\diag}{\mathrm{diag}}
\newcommand{\tr}{\mathrm{tr}}
\newcommand{\vol}{\mathrm{vol}}
\begin{document}

\begin{frontmatter}
\title{Volume Preservation by Runge-Kutta Methods}

\author[latrobe]{Philipp Bader} %
\ead{p.bader@latrobe.edu.au}
\author[latrobe]{David I. McLaren} %
\ead{d.mclaren@latrobe.edu.au}
\author[latrobe]{{G.R.W. Quispel}%
} \ead{r.quispel@latrobe.edu.au}
\author[damtp]{Marcus Webb\corref{cor1}} 
\ead{m.d.webb@maths.cam.ac.uk}
\cortext[cor1]{Corresponding author}

\address[latrobe]{
Department of Mathematics and Statistics, La Trobe University, 3086 Bundoora VIC, Australia}
\address[damtp]{DAMTP, University of Cambridge, Wilberforce Rd, Cambridge CB3 0WA, UK}

%\author{Philipp Bader}
%\address{Department of Mathematics and Statistics, La Trobe University, Bundoora, Victoria 3086, Australia}
%\email{p.bader@latrobe.edu.au}
%\author{David I. McLaren}
%\address{Department of Mathematics and Statistics, La Trobe University, Bundoora, Victoria 3086, Australia}
%\email{d.mclaren@latrobe.edu.au}
%\author{G.R.W. Quispel}
%\address{Department of Mathematics and Statistics, La Trobe University, Bundoora, Victoria 3086, Australia}
%\email{r.quispel@latrobe.edu.au}
%\author{Marcus Webb}
%\address{DAMTP, University of Cambridge, Wilberforce Rd, Cambridge CB3 0WA, UK}
%\email{m.d.webb@maths.cam.ac.uk}

\begin{abstract}
It is a classical theorem of Liouville that Hamiltonian systems preserve volume in phase space. Any symplectic Runge-Kutta method will respect this property for such systems, but it has been shown that no B-Series method can be volume preserving for all volume preserving vector fields (BIT 47 (2007) 351–378 \& IMA J. Numer. Anal. 27 (2007) 381--405). In this paper we show that despite this result, symplectic Runge-Kutta methods can be volume preserving for a much larger class of vector fields than Hamiltonian systems, and discuss how some Runge-Kutta methods can preserve a modified measure exactly.
\end{abstract}
%\maketitle
\begin{keyword}
volume preservation \sep Runge-Kutta method\sep measure preservation \sep Kahan's method
\end{keyword}
\end{frontmatter}

%\pb{Issue 1: linear foliation has to be used as property, not as name for vector field}

%\pb{Issue 2: RQ requests that midpoint, trapezoidal and Kahan's method appear together in text. Furthermore, the generalization for Kahan from quadratic to determinant condition should be a lemma/theorem and not just a sentence.}
% \pb{List of changes}
% \begin{itemize}
% 	\item Presentation of the measure preserving part has been changed. Review of existing results in the introduction, separate lemmata for Kahan and trapezoidal in the last section
% 	\item \verb#foliated vector fields# has been changed by v.f. that possess a foliation. In some instances by \verb#foliate vector fields# in accordance with \cite{mclachlan2003lie}
% 	\item result for measure preservation for sums had to be dropped
% 	\item new result for measure preservation of Runge-Kutta methods that reduce to Kahan for quadratic v.f. for sums 
% 	\item new remark 4.8 that tells us that at least for the usual Kahan measure, we cannot expect to go beyond sums
% 	\end{itemize}
%\mw{List of changes}
%\begin{itemize}
%\item corrected a typo in lemma 2.7.
%\end{itemize}

%\pb{List of issues}
%\begin{itemize}
	%% \item Remark 2.9: What can be said about non-special SRK methods? (resolved MW)
	%\item It seems Theorem 4.6 is not true. Can we fix it and if not, should we drop Lemma 2.6 which is only needed for this purpose?
%\end{itemize}

\section{Introduction}
The construction of numerical schemes for solving ordinary differential equations (ODEs) such that some qualitative geometrical property of the analytical solution is preserved exactly by the numerical solution is an area of great interest and active research today as part of the field of Geometric Integration. The most developed topic in this context is that of integrating Hamiltonian systems while preserving the symplecticity of the flow, and it was found that a class of Runge-Kutta (RK) methods, now called symplectic Runge-Kutta (SRK) methods, provides a convenient way to achieve this \cite[\S VI.4]{hairer2006geometric}. 

It is a classical theorem due to Liouville that Hamiltonian systems are also volume preserving: for all bounded open sets $\Omega$ of phase space, the flow map $\varphi_t$ satisfies $\vol(\varphi_t(\Omega))=\vol(\Omega)$ for all $t$. Equivalently, the Jacobian determinant, $\det(\varphi_t^\prime(x))$, is 1 for all $x$ and $t$ \cite[VI.9]{hairer2006geometric}. Any symplectic mapping of phase space has this property, and therefore SRK methods are volume preserving for Hamiltonian systems. Beyond Hamiltonian systems, an ODE $\dot{x} = f(x)$ is volume preserving if and only if $f$ is divergence free (sometimes called source free). General volume preservation like this can be found in applications involving incompressible fluid flows and vorticities, ergodic theory and statistical mechanics, and problems in electromagnetism \cite{iserles2007b,kang1995volume,quispel1995volume}.

One can ask if any SRK methods are volume preserving for all divergence free vector fields $f$, and it has been known for 20 years that the answer is no. Kang and Zai-Jiu showed that no RK method can be volume preserving even for the class of \emph{linear} divergence free vector fields \cite{kang1995volume}. It was later shown by Iserles, Quispel and Tse and independently by Chartier and Murua that no B-Series method can be volume preserving for all divergence free vector fields \cite{iserles2007b,chartier2005preserving}. However, Hairer, Lubich and Wanner have considered separable divergence free vector fields of the form
\begin{equation}\tag{HLW}
f(x,y) = (u(y),v(x))^\top,
\end{equation} 
for functions $u:\R^n \to \R^m$, $v:\R^m \to \R^n$ \cite[Thm.~9.4]{hairer2006geometric}. There the authors prove that any SRK method with at most two stages (and by the product rule of differentiation any composition of such methods) is volume preserving for these systems, giving a hint at the fact that SRK methods can be volume preserving for a much larger class of vector fields than just Hamiltonian systems.

%Let us prove two basic results here in the introduction. 
As we will show in the introduction, vector fields $f$ in that class must satisfy the determinant condition
\begin{equation}\tag{det}\label{det}
\det\left(I+\frac{h}{2}f^\prime(x)\right) = \det\left(I-\frac{h}{2}f^\prime(x)\right) \text{ for all } h>0, \, x\in \R^n,
\end{equation}
where $I$ denotes the $n\times n$ identity matrix.
In order to substantiate this claim and in anticipation of some of the results to be discussed later, we consider the following three Runge-Kutta methods $x\mapsto\phi_h(x)$
which have been shown to preserve certain measures $\mu(x)dx$ for quadratic Hamiltonian vector fields  \cite{celledoni2013geometric}: 
\begin{enumerate}
\item The implicit midpoint rule
\[
\frac{\phi_h(x)-x}{h} = f\left(\frac{\phi_h(x)+x}{2}\right), \quad \text{with}\quad \mu(x)=1,
\]
\item the trapezoidal rule
\begin{equation}\label{trapezoidal}
\frac{\phi_h(x)-x}{h} = \tfrac12\Bigl(f\left(x\right)+f\left(\phi_h(x)\right)\Bigr), \quad \text{with}\quad \mu(x)=\det\left({1-\tfrac{h}{2}f'(x)}\right),
\end{equation}
\item and Kahan's method (restricted to quadratic vector fields)
\begin{equation}\label{kahan}
\frac{\phi_h(x)-x}{h} = 2f\left(\frac{x+\phi_h(x)}2\right)-\tfrac12f(x)-\tfrac12f(\phi_h(x)), \quad \text{with}\quad \mu(x)=\det\Bigl({1-\tfrac{h}{2}f'(x)}\Bigr)^{-1}.
\end{equation}
\end{enumerate}
These quadratic Hamiltonian vector fields satisfy the determinant condition \eqref{det} and we will establish in section~\ref{sec.measure} that this condition is essential for these measure preservation properties.
Indeed, using the chain rule, we compute the Jacobian matrix of the midpoint rule to 
%The implicit midpoint rule $\phi_h(x) = x + hf(k(x))$, where $k(x) = (x+\phi_h(x))/2$, is volume preserving for all such functions. 
%Indeed, compute using the chain rule,
\[
\phi_h^\prime(x) = I + \frac{h}{2}f^\prime\left(\frac{x+\phi_h(x)}{2}\right)\Bigl(I + \phi_h^\prime(x)\Bigr),
\]
which in turn gives the condition for volume preservation
\[
\det(\phi_h^\prime(x)) = \frac{\det(I+\frac{h}{2}f^\prime\bigl(({x+\phi_h(x)})/{2})\bigr)}{\det(I-\frac{h}{2}f^\prime\bigl((x+\phi_h(x))/2)\bigr)} = 1.
\]
Note that in agreement with \cite{kang1995volume}, it is clear that for the implicit midpoint rule we cannot consider a class of vector fields any larger than this and realistically expect volume preservation. Hence we restrict our discussion to vector fields satisfying this determinant condition \eqref{det}.
These functions, as we show later, are divergence free and include Hamiltonian systems and HLW separable systems described above. 

%For functions satisfying the determinant condition \eqref{det}, the trapezoidal rule $\phi_h(x) = x + \frac{h}{2}f(x) + \frac{h}{2}f(\phi_h(x))$ is not volume preserving, but it preserves the measure $\mu(x) = \det(I\pm\frac{h}{2}f^\prime(x))$ which approximates the volume form to order $\mathcal{O}(h^2)$. As above, compute using the chain rule
%\[
%\phi_h^\prime(x) = I + \frac{h}{2}f^\prime(x) + \frac{h}{2}f^\prime(\phi_h(x))\phi_h^\prime(x),
%\]
%and see that
%\[
%\det(\phi_h^\prime(x)) = \frac{\det(I+\frac{h}{2}f^\prime(x))}{\det(I-\frac{h}{2}f^\prime(\phi_h(x)))} = \frac{\mu(x)}{\mu(\phi_h(x))}.
%\]
The contributions of this paper are to highlight the relevance of the determinant condition \eqref{det} for volume preservation by Runge-Kutta methods, and to introduce and prove results regarding volume preservation for some classes of vector fields lying \emph{between} Hamiltonian vector fields and those satisfying the determinant condition \eqref{det}. Not only does this further the understanding of Runge-Kutta methods and volume preservation of numerical methods in general, but it gives examples of where in applications one could in principle use Runge-Kutta methods and preserve volume for a non-Hamiltonian system. Furthermore, we discuss how Runge-Kutta methods can also preserve a modified measure exactly. 
The importance of such methods is that the dynamics of the numerical solution lie in the class of measure preserving systems, giving a qualitative advantage over methods lacking this property \cite{mclachlan2001kinds}.
It should be noted that there are general approaches to constructing volume preserving splitting methods for a general divergence free vector field \cite{hairer2006geometric, quispel1995volume,kang1995volume}, but Runge-Kutta methods offer practical and theoretical simplicity and familiarity.

\section{Properties of Runge-Kutta methods}

This section is fairly technical, but it provides us with the necessary tools for the discussion in sections 3 and 4. We use the following notation to describe a Runge-Kutta method for the autonomous system $\dot{x}=f(x)$. We assume $f$ is continuously differentiable throughout the paper. For each step-size $h$, a $s$-stage Runge-Kutta method provides a map $\phi_h : \R^n \to \R^n$, defined by
\begin{equation*}
\phi_h(x) = x + h\sum_{i = 1}^s b_i f(k_i),
\end{equation*}
where the stages $k_i$ satisfy
\begin{equation*}
k_i = x + h\sum_{j=1}^s a_{ij}f(k_j), \text{ for } i= 1, \ldots, s.
\end{equation*}
As usual, we consolidate the $b_i$\rq{}s and $a_{ij}$\rq{}s into the Butcher tableau consisting of the vector $b$ and the matrix $A$. We make use of the Kronecker product throughout, which for $A \in \R^{n\times n}$ and $B \in \R^{m\times m}$ is defined to be
\begin{equation*}
A\otimes B = \begin{pmatrix} a_{11} B & \cdots & a_{1n} B \\ \vdots & \ddots & \vdots \\ a_{n1} B & \cdots & a_{nn} B \end{pmatrix}\in\R^{nm\times nm}.
\end{equation*}
\begin{lemma}\label{basicdeterminantlemma}
The Jacobian matrix of a RK method can be written as
\begin{equation}\label{rungeJacobianeqn}
%\phi_h^\prime(x) = I + h(b^\top \otimes I) F (I\otimes I - h(A\otimes I)F)^{-1}(\mathbbm{1}\otimes I),
\phi_h^\prime(x) = I + h(b^\top \otimes I) F (I_s\otimes I - h(A\otimes I)F)^{-1}(\mathbbm{1}\otimes I),
\end{equation}
with determinant
\begin{equation}\label{rungeJacobiandet}
%\det(\phi_h'(x)) = \frac{\det(I\otimes I - h((A-\mathbbm{1}b^\top)\otimes I)F)}{\det(I \otimes I - h(A\otimes I)F)},
\det(\phi_h'(x)) = \frac{\det(I_s\otimes I - h((A-\mathbbm{1}b^\top)\otimes I)F)}{\det(I_s \otimes I - h(A\otimes I)F)},
\end{equation}
where $F = \diag(f'(k_1),\ldots,f'(k_s))$, $\mathbbm{1}$ is an $s\times 1$ vector of 1\rq{}s and $I_s$ is the $s\times s$ identity matrix.
\begin{proof}
Computing directly, we find
\begin{equation}\label{rungeJacobian}
\phi_h'(x) = I + h \sum_{i=1}^s b_i f'(k_i) k_i'(x) = I + h(b^\top \otimes I) F (k_1'(x),\ldots,k_s'(x))^\top.
\end{equation}
By definition of the stages $k_i$, the derivatives $k_i'(x)$ satisfy
\begin{equation*}
\begin{pmatrix}
          I - ha_{11}f'(k_1) & -ha_{12}f'(k_2) & \cdots & -ha_{1s}f'(k_s) \\
          -ha_{21}f'(k_1) & I - ha_{22}f'(k_2) & \cdots & -ha_{2s}f'(k_s) \\
           \vdots & \vdots & \ddots & \vdots \\
           -ha_{s1}f'(k_1) & -ha_{s2}f'(k_2) & \cdots  & I-ha_{ss}f'(k_s) \end{pmatrix}\begin{pmatrix} k_1'(x) \\ k_2'(x) \\ \vdots \\ k_s'(x) \end{pmatrix} = \begin{pmatrix} I \\ I \\ \vdots \\ I \end{pmatrix}.
\end{equation*}
Written more compactly using  Kronecker products, this is
\begin{equation}\label{krelation}
(I_s\otimes I - h(A\otimes I)F) (k_1'(x),\ldots,k_s'(x))^\top = \mathbbm{1}\otimes I.
\end{equation}
The form of the Jacobian matrix can now be found by substituting \eqref{krelation} into \eqref{rungeJacobian}.

For the determinant, use the block determinant identity
\begin{equation}\label{blockdetidentity}
\det(U)\det(X-WU^{-1}V)=\det\begin{pmatrix} U & V \\ W & X \end{pmatrix} = \det(X)\det(U-VX^{-1}W)
\end{equation}
on the expression \eqref{rungeJacobianeqn} with $U = I_s\otimes I - h(A\otimes I)F$, $V = (\mathbbm{1}\otimes I)$, $W = -h(b^\top \otimes I) F$ and $X = I$.
\end{proof}
\end{lemma}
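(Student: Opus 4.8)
The plan is to compute the Jacobian directly by differentiating the two defining relations of the method, reduce the resulting system for the stage derivatives to a single Kronecker-structured linear system, and then collapse the expression using a Schur complement (block determinant) identity.

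First I would differentiate the output formula $\phi_h(x) = x + h\sum_{i} b_i f(k_i)$ with respect to $x$, obtaining $\phi_h'(x) = I + h\sum_i b_i f'(k_i)k_i'(x)$, which I would rewrite compactly as $I + h(b^\top\otimes I)F(k_1'(x),\ldots,k_s'(x))^\top$ with $F = \diag(f'(k_1),\ldots,f'(k_s))$. The unknowns are the stage derivatives $k_i'(x)$, so the next step is to differentiate the stage equations $k_i = x + h\sum_j a_{ij}f(k_j)$, yielding $k_i'(x) = I + h\sum_j a_{ij} f'(k_j)k_j'(x)$. Stacking these $s$ matrix equations produces a linear system whose coefficient matrix is exactly $I_s\otimes I - h(A\otimes I)F$ and whose right-hand side is $\mathbbm{1}\otimes I$; inverting this matrix (which is legitimate for $h$ small enough that it is nonsingular) and substituting back into the expression for $\phi_h'(x)$ gives the stated form of the Jacobian.

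For the determinant, I would recognize that the Jacobian has the Schur complement shape $X - WU^{-1}V$ and invoke the block determinant identity $\det(U)\det(X-WU^{-1}V) = \det(X)\det(U-VX^{-1}W)$. Identifying $U = I_s\otimes I - h(A\otimes I)F$, $V = \mathbbm{1}\otimes I$, $W = -h(b^\top\otimes I)F$ and $X = I$, one checks that $X - WU^{-1}V$ reproduces $\phi_h'(x)$ exactly, so that $\det(\phi_h'(x)) = \det(U - VX^{-1}W)/\det(U)$. The numerator then simplifies using the Kronecker mixed-product rule $(\mathbbm{1}\otimes I)(b^\top\otimes I) = (\mathbbm{1}b^\top)\otimes I$, which merges the $-h(A\otimes I)F$ and $+h((\mathbbm{1}b^\top)\otimes I)F$ terms into $-h((A-\mathbbm{1}b^\top)\otimes I)F$, giving the claimed ratio.

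The computation is essentially mechanical, so I do not expect a genuine obstacle; the one place requiring care is the bookkeeping with Kronecker products — in particular getting the order of the factors right so that the mixed-product property applies cleanly, and keeping track of the two equal forms of the block identity so that the correct Schur complement ($U - VX^{-1}W$ rather than $X - WU^{-1}V$) ends up in the numerator.
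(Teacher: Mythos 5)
Your proposal is correct and follows essentially the same route as the paper's own proof: differentiating the update and stage equations, assembling the Kronecker-structured system $(I_s\otimes I - h(A\otimes I)F)(k_1'(x),\ldots,k_s'(x))^\top = \mathbbm{1}\otimes I$, and applying the block determinant identity with the identical choices of $U$, $V$, $W$, $X$. The only difference is that you spell out the mixed-product step $(\mathbbm{1}\otimes I)(b^\top\otimes I)=(\mathbbm{1}b^\top)\otimes I$ that collapses the numerator, which the paper leaves implicit.
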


We wish to understand for which vector fields $f$ and which Runge-Kutta methods defined by $A$ and $b$, the determinant \eqref{rungeJacobiandet} is unity. As one might expect, this turns out to be simpler for symplectic Runge-Kutta methods. Now, for the purpose of exposition, we restrict to methods described in the following definition and instruct the reader in how certain results can be proven for general SRK methods at the end of the section.

\begin{definition}
A SRK method is said to be a \emph{special symplectic Runge-Kutta method (SSRK)} if $b_j \neq 0$ for all $j$, so that the Butcher tableau may be written $A = \frac{1}{2}(\Omega+\mathbbm{1}\mathbbm{1}^\top)B$, where $B = \diag(b)$ and $\Omega$ is a skew-symmetric matrix.
\end{definition}
This definition is reasonable because if $b_j \neq 0$ for all $j$, then the matrix $M = BA + A^\top B - bb^\top$ is zero (which implies the method is symplectic) if and only if $\Omega$ is skew-symmetric. The expression $\frac{1}{2}(\Omega+\mathbbm{1}\mathbbm{1}^\top)B$ therefore constitutes a normal form for most SRK methods of interest \cite{hairer2006geometric}.
\begin{lemma}\label{SSRK}
An s-stage SSRK method is volume preserving for $\dot{x}=f(x)$ if and only if
\begin{equation}\label{sstagecondition}
\det(I_s\otimes I - h(A\otimes I)F) = \det(I_s\otimes I + h(A^\top \otimes I)F),
\end{equation}
where $F = \diag(f'(k_1),\ldots,f'(k_s))$.
\begin{proof}
The equation $M=0$ can be written $-A^\top = B(A-\mathbbm{1}b^\top)B^{-1}$. Hence
\begin{align*}
\det(I_s\otimes I + h(A^\top \otimes I)F) &= \det(I_s\otimes I - h(B(A-\mathbbm{1}b^\top)B^{-1}\otimes I)F) \\
                                                            &= \det(I_s \otimes I - h (B\otimes I)(A-\mathbbm{1}b^\top)\otimes I)F(B\otimes I)^{-1}) \\
                                                            &=\det(I_s\otimes I - h((A-\mathbbm{1}b^\top)\otimes I)F)
\end{align*}
The result now follows from Lemma \ref{basicdeterminantlemma}.
\end{proof}
\end{lemma}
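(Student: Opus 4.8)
The plan is to reduce everything to Lemma~\ref{basicdeterminantlemma} and then exploit the SSRK normal form. Lemma~\ref{basicdeterminantlemma} already writes $\det(\phi_h'(x))$ as the ratio of $\det(I_s\otimes I - h((A-\mathbbm{1}b^\top)\otimes I)F)$ over $\det(I_s\otimes I - h(A\otimes I)F)$, so volume preservation, $\det(\phi_h'(x))=1$, is equivalent to the equality of this numerator and denominator. The whole task therefore collapses to proving the single identity $\det(I_s\otimes I - h((A-\mathbbm{1}b^\top)\otimes I)F) = \det(I_s\otimes I + h(A^\top\otimes I)F)$; granting it, condition~\eqref{sstagecondition} is exactly the statement that this common value equals the denominator.

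To establish that identity I would use the symplecticity of the method. Writing the symplecticity condition $M = BA + A^\top B - bb^\top = 0$ and using $b=B\mathbbm{1}$, one can rearrange it into the similarity relation $-A^\top = B(A-\mathbbm{1}b^\top)B^{-1}$, where invertibility of $B=\diag(b)$ is guaranteed precisely by the SSRK hypothesis $b_j\neq0$. Substituting this into $\det(I_s\otimes I + h(A^\top\otimes I)F)$ replaces the argument by $I_s\otimes I - h\bigl((B(A-\mathbbm{1}b^\top)B^{-1})\otimes I\bigr)F$.

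The decisive step is to pull the conjugation by $B$ out of the determinant. Using the mixed-product property of the Kronecker product I would factor the inner matrix as $(B\otimes I)\bigl((A-\mathbbm{1}b^\top)\otimes I\bigr)(B^{-1}\otimes I)$. The key observation is that, because $B$ is diagonal, both $B\otimes I$ and $B^{-1}\otimes I$ are block-diagonal with scalar blocks $b_jI$ and $b_j^{-1}I$, and hence commute with the block-diagonal matrix $F=\diag(f'(k_1),\ldots,f'(k_s))$. This lets me move $B^{-1}\otimes I$ past $F$ and recognise the entire expression as the conjugate $(B\otimes I)\bigl[I_s\otimes I - h((A-\mathbbm{1}b^\top)\otimes I)F\bigr](B^{-1}\otimes I)$, whose determinant is unchanged by the similarity. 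That returns exactly the numerator supplied by Lemma~\ref{basicdeterminantlemma}, completing the argument.

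I expect the only genuine subtlety to be the legitimacy of extracting the similarity: a general $B\otimes I$ would not commute with $F$, and it is solely the diagonal form of $B$ (forced by $b_j\neq0$, which is what permits the normal form $A=\tfrac12(\Omega+\mathbbm{1}\mathbbm{1}^\top)B$) that makes the manoeuvre valid. The remaining pieces --- rearranging $M=0$ and tracking the Kronecker identities --- are routine bookkeeping.
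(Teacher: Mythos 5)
Your proposal is correct and follows essentially the same route as the paper: rewrite the symplecticity condition $M=0$ as $-A^\top = B(A-\mathbbm{1}b^\top)B^{-1}$, substitute into $\det(I_s\otimes I + h(A^\top\otimes I)F)$, and remove the conjugation by $B\otimes I$ from the determinant (the paper does this in one line; your explicit justification via the commutation of the block-scalar matrix $B\otimes I$ with the block-diagonal $F$ is exactly the point being used), then invoke Lemma~\ref{basicdeterminantlemma}. No gaps.
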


When $s=1$, the only SSRK method is the implicit midpoint rule. In this case, Lemma \ref{SSRK} gives the determinant condition \eqref{det} from the introduction.

When $s=2$, we have a three-parameter family of SSRK methods, which reduces to two-parameter if we impose the consistency condition $b_1+b_2 = 1$. Now Lemma \ref{SSRK} gives the condition
\begin{equation}
\det\begin{pmatrix} I - ha_{11}f^\prime(k_1) & -ha_{12}f^\prime(k_2) \\ -ha_{21}f^\prime(k_1) & I-ha_{22}f^\prime(k_2) \end{pmatrix} = \det\begin{pmatrix} I + ha_{11}f^\prime(k_1) & ha_{21}f^\prime(k_2) \\ ha_{12}f^\prime(k_1) & I+ha_{22}f^\prime(k_2) \end{pmatrix}.
\end{equation}
Applying the block determinant identity \eqref{blockdetidentity}, this boils down to
\begin{multline}\label{2stagecond}
\det(I - ha_{11}f^\prime(k_1) - ha_{22}f^\prime(k_2) + h^2\det(A)f^\prime(k_1)f^\prime(k_2)) \\
= \det(I + ha_{11}f^\prime(k_1) + ha_{22}f^\prime(k_2) + h^2 \det(A) f^\prime(k_1) f^\prime(k_2)).
\end{multline}
We were able here to simplify the identity \eqref{blockdetidentity} because the top-left block ($I-ha_{11}f^\prime(k_1)$) and the bottom-left block ($-ha_{21}f^\prime(k_1)$) commute. This cannot be done for $s \geq 3$.

These next three lemmata give some basic operations that can be performed on the vector field which send volume preserving ODEs to volume preserving ODEs, and effect a simple change in the Jacobian determinant of some RK methods for general vector fields.

\begin{lemma}\label{linearchangeofvariables}
Let $f: \R^n \to \R^n$ and define a linear change of variables $\tilde{f}(x) = Pf(P^{-1}x)$ for some  invertible matrix $P$. Then the RK map $\tilde\phi_h$ for solving $\dot{x}=\tilde{f}(x)$ satisfies
\begin{equation}
\tilde\phi_h(x) = P\phi_h(P^{-1}x), \quad \tilde\phi_h^\prime(x) = P\phi_h^\prime(P^{-1}x)P^{-1}.
\end{equation}
\end{lemma}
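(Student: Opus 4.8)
The plan is to establish the identity first at the level of the internal stages, then propagate it to the map $\tilde\phi_h$, and finally obtain the Jacobian relation by a single application of the chain rule. Let $k_1,\ldots,k_s$ denote the stages of the given RK method applied to $\dot x = f(x)$ with starting value $P^{-1}x$, so that by definition
\[
k_i = P^{-1}x + h\sum_{j=1}^s a_{ij} f(k_j), \qquad i = 1,\ldots,s.
\]
My central claim is that the vectors $Pk_1,\ldots,Pk_s$ are precisely the stages of the \emph{same} RK method (same tableau $A$, $b$) applied to $\dot x = \tilde f(x)$ with starting value $x$.

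To verify the claim I would substitute the candidate stages $\tilde k_i := Pk_i$ into the stage equations for $\tilde f$ and use the definition $\tilde f(y) = Pf(P^{-1}y)$, which gives $\tilde f(Pk_j) = Pf(k_j)$. Computing,
\[
x + h\sum_{j=1}^s a_{ij}\,\tilde f(Pk_j) = x + h\sum_{j=1}^s a_{ij}\,Pf(k_j) = P\Bigl(P^{-1}x + h\sum_{j=1}^s a_{ij} f(k_j)\Bigr) = Pk_i,
\]
so the $Pk_i$ indeed satisfy the defining stage system for $\tilde f$. Feeding this into the update formula yields
\[
\tilde\phi_h(x) = x + h\sum_{i=1}^s b_i\,\tilde f(Pk_i) = x + h\sum_{i=1}^s b_i\,Pf(k_i) = P\Bigl(P^{-1}x + h\sum_{i=1}^s b_i f(k_i)\Bigr) = P\phi_h(P^{-1}x),
\]
which is the first assertion. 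Differentiating this relation with respect to $x$ via the chain rule — noting $\tfrac{\d}{\d x}(P^{-1}x) = P^{-1}$ — immediately gives $\tilde\phi_h'(x) = P\phi_h'(P^{-1}x)P^{-1}$, the second assertion.

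The only real subtlety, and the point I would be careful about, is the well-definedness of the implicit stages: for an implicit method the $k_i$ are defined only as solutions of a nonlinear system, so strictly speaking I should argue that the correspondence $\tilde k_i = Pk_i$ matches the \emph{unique} stage set on each side. This is handled by the standard observation that, for $h$ sufficiently small, the stage equations admit a unique solution (by the contraction mapping principle / implicit function theorem, using that $f$ is $C^1$); the invertible map $y\mapsto Py$ then sets up a bijection between the unique solution for $f$ at $P^{-1}x$ and the unique solution for $\tilde f$ at $x$, so no spurious branch issues arise. Everything else is routine algebraic bookkeeping with the Kronecker/linear structure, and no step beyond the chain rule is needed.
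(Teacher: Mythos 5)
Your proof is correct; it is the standard argument (show the stages transform as $\tilde k_i = Pk_i$, propagate through the update formula, then apply the chain rule), and your remark about uniqueness of the implicit stages for small $h$ is an appropriate piece of care. The paper itself states this lemma without proof, treating it as routine, so there is no alternative argument to compare against.
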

%\pb{This lemma is now restricted to 1-stage methods:}
\begin{lemma}\label{foliationlemma}
Let $u : \R^m \to \R^m$, $v : \R^{n+m} \to \R^n$ and define $f:\R^{m+n} \to \R^{m+n}$ by
\begin{equation}\label{prototypefoliation}
f\left(x, y \right) = \begin{pmatrix} u(x) \\ v(x,y)\end{pmatrix}.
\end{equation}
Now let $\phi_h:\R^{n+m}\to\R^{n+m}$ be a \emph{one-stage} RK map for solving $(\dot{x},\dot{y})^\top=f(x,y)$, $\psi_h : \R^m\to\R^m$ that for solving $\dot{x} = u(x)$, and $\chi_h : \R^{n+m} \to \R^n$ that for solving $\dot{y} = v(x,y)$ where $x$ is treated as a parameter. Then
\begin{equation}\label{1stagerk}
\phi_h(x,y) = \begin{pmatrix} \psi_h(x)\\ \chi_h(k_1(x),y) \end{pmatrix},
\end{equation}
and consequently
\begin{equation*}
\det(\phi_h'(x,y)) = \det(\psi_h'(x))\det(\partial_y\chi_h(k_1(x),y)),
\end{equation*}
where $k_1(x)=x+ha_{11} u(k_1(x))$ is the internal stage of the RK method $\psi_h(x)$ and $\partial_y$ denotes the derivative with respect to the $y$ coordinate.
\begin{proof}
The full method is
\begin{align}\label{eq.1stagefull}
\phi_h(x,y) &= \begin{pmatrix} x \\ y \end{pmatrix} + hb_1\begin{pmatrix}
u(k_1(x)) \\ v(k_1(x),l_1(k_1(x),y)) \end{pmatrix},\\
\intertext{with the internal stages}
\nonumber
\begin{pmatrix} k_1(x) \\ l_1(k_1(x),y) \end{pmatrix} &= \begin{pmatrix}x \\ y \end{pmatrix}+ ha_{11}\begin{pmatrix} u(k_1(x)) \\ v(k_1(x),l_1(k_1(x),y)) \end{pmatrix}.
\end{align}
The methods applied to each component of the $\R^{n+m}$ dimensional system are given by
\begin{align}\label{eq.1stagesep}
\begin{pmatrix} \psi_h(x) \\ \chi_h(x,y) \end{pmatrix} &= \begin{pmatrix} x \\ y \end{pmatrix} + hb_1\begin{pmatrix} u(k_1(x)) \\ v(x,l_1(x,y)) \end{pmatrix}.
\end{align}
Comparing \eqref{eq.1stagefull} with \eqref{eq.1stagesep} yields the result
\eqref{1stagerk}.
To prove the last part, note that the Jacobian matrix has block structure
\begin{equation*}
\phi_h^\prime(x,y) = \begin{pmatrix} \psi_h^\prime(x) & 0 \\ \partial_x(\chi_h(k_1(x),y)) & \partial_y(\chi_h(k_1(x),y)) \end{pmatrix}
\end{equation*}
and so the determinant $\det(\phi_h'(x,y))$ is the product of the determinants of the diagonal blocks.
\end{proof}
\end{lemma}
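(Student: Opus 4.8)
The plan is to write the one-stage method out explicitly and exploit the triangular structure of $f$ to decouple the internal stages. Writing the single stage as $k_1 = (K_1, L_1)^\top$ with $K_1 \in \R^m$ and $L_1 \in \R^n$, the stage equation $k_1 = (x,y)^\top + h a_{11} f(k_1)$ splits into $K_1 = x + h a_{11} u(K_1)$ and $L_1 = y + h a_{11} v(K_1, L_1)$. The first of these involves only $x$ and $K_1$, because $u$ depends on the first coordinate alone; hence $K_1$ must coincide with the internal stage $k_1(x)$ of the method $\psi_h$ applied to $\dot x = u(x)$.

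Next I would substitute $K_1 = k_1(x)$ into the second stage equation to get $L_1 = y + h a_{11} v(k_1(x), L_1)$, which is exactly the defining relation for the internal stage of $\chi_h$ applied to $\dot y = v(k_1(x), y)$ with the first argument frozen as a parameter; thus $L_1 = l_1(k_1(x), y)$. Feeding these identifications into the update $\phi_h(x,y) = (x,y)^\top + h b_1 f(k_1)$ and comparing component-wise with the definitions of $\psi_h$ and $\chi_h$ then delivers the claimed factorization $\phi_h(x,y) = (\psi_h(x), \chi_h(k_1(x), y))^\top$.

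For the determinant I would simply differentiate this factorization. Since the first component $\psi_h(x)$ is independent of $y$, the Jacobian $\phi_h'(x,y)$ is block lower triangular, with diagonal blocks $\psi_h'(x)$ and $\partial_y \chi_h(k_1(x), y)$ and a zero upper-right block; its determinant therefore factors as the product of the determinants of the two diagonal blocks.

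The main obstacle I anticipate is not any single calculation but the bookkeeping needed to track correctly where the propagated value $k_1(x)$ enters: the method $\chi_h$ must be evaluated at $k_1(x)$ and not at $x$, reflecting that the $y$-dynamics see the stage value of the $x$-variable rather than its initial value. It is worth emphasising that this clean decoupling is special to the one-stage setting; for $s \ge 2$ the $y$-stages $L_i$ would couple to several distinct $x$-stages $K_j$, so $v$ gets sampled along a varying parameter and no single parameter-dependent application of $\chi_h$ could reproduce the $y$-component. (Existence and uniqueness of the implicitly defined stages, for $h$ small, is routine via the implicit function theorem and so I would not dwell on it.)
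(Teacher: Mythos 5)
Your proposal is correct and follows essentially the same route as the paper: split the single stage equation using the triangular structure of $f$ to identify the $x$-stage with that of $\psi_h$ and the $y$-stage with that of $\chi_h$ evaluated at the parameter $k_1(x)$, then read off the determinant from the block lower-triangular Jacobian. Your observation about where $k_1(x)$ (rather than $x$) enters is exactly the subtle point the paper's proof also hinges on.
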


%\pb{This lemma is from the conundrum: we can only do sums for $s$ stages:}
For some simple vector fields, this can be generalized to certain $s$-stage methods. Note that the notation for $\chi_h$ is different to that for Lemma \ref{foliationlemma}.
\begin{lemma}\label{foliationsum}
Let $u : \R^m \to \R^m$, $v : \R^{n} \to \R^n$, $w:\R^m\to\R^n$ and define $f:\R^{m+n} \to \R^{m+n}$ by
\begin{equation}
f\left(x, y \right) = \left(\begin{array}{l} u(x) \\ w(x) + v(y)\end{array} \right).
\end{equation}
Now let $\phi_h(x,y)$ be the RK map for solving $(\dot{x},\dot{y})^\top=f(x,y)$, $\psi_h(x)$ that for solving $\dot{x} = u(x)$, and $\chi_h(c,y)$ that for solving $\dot{y} = c + v(y)$. Define $c_i = \sum_j a_{ij}$. If the Butcher tableau is such that
\begin{equation}
\label{2stagecondition}
\delta_j(i,k) = \frac{a_{ij}-a_{kj}}{c_i-c_k} \quad 1\leq i,j,k\leq s,
\end{equation}
is finite and independent of distinct $i$ and $k$ for each $j$ then there exist functions $d_h,e_h,c_h:\R^m \to \R^n$ such that
\begin{equation}
\phi_h(x,y) = \begin{pmatrix} \psi_h(x) \\ \chi_h(d_h(x),y+he_h(x)) + hc_h(x) \end{pmatrix}  \text{ for all } y.
\end{equation}
Consequently,
\begin{equation}
\det(\phi_h'(x,y)) = \det(\psi_h'(x))\det(\partial_y\chi_h(d_h(x),y+he_h(x))).
\end{equation}
\begin{proof}
Write $\phi_h(x,y) = (\psi_h(x),\sigma_h(x,y))$. Note that $\sigma_h(x,y) \neq \chi_h(w(x),y)$, but they are related as follows.
\begin{align} \label{eq.sigmasum}
\sigma_h(x,y) &= y  + h\sum_{i=1}^s b_i w(k_i) +h\sum_{i=1}^s b_iv(l_i(w(k_1),\ldots,w(k_s),y)), \\ \nonumber
\chi_h(c,y) &= y + h\sum_{i=1}^sb_ic + h\sum_{i=1}^s b_iv(l_i(c,\ldots,c,y)), 
%\end{align}with stages\begin{align}\nonumber
\end{align}
with stage values
\begin{align*}
 l_i(\zeta_1,\ldots,\zeta_s,y) &=  y  + h\sum_{j=1}^s a_{ij} \zeta_j + h\sum_{j=1}^s a_{ij}v(l_j(\zeta_1,\ldots,\zeta_s,y)).
\intertext{%
Now let $d$ be an arbitrary number. Then we have for each $i$,}
%\begin{equation*}
l_i(w(k_1),\ldots,w(k_s),y) &= y + h e_i + h\sum_{j=1}^s a_{ij} d + h\sum_{j=1}^s a_{ij}v(l_j(w(k_1),\ldots,w(k_s),y)),
\end{align*}
where $e_i = \sum_{j=1}^s a_{ij}(w(k_j)-d)$. Hence
\begin{equation}\label{stageidentity}
l_i(w(k_1),\ldots,w(k_s),y) = l_i(d,\ldots,d,y+he_i).
\end{equation}
We want to choose $d$ such that $e_i=e_k \forall i,k$. Equivalently,
\begin{equation*}
\sum_{j=1}^s a_{ij}(w(k_j)-d) = \sum_{j=1}^s a_{kj}(w(k_j)-d) \text{ for all } i \neq k.
\end{equation*}
Solving for $d$ we find
\begin{equation*}
d = \sum_{j=1}^s w(k_j)\left(\frac{a_{ij}-a_{kj}}{c_i-c_k}\right) \text{ for all } i \neq k.
\end{equation*}
This will only give us a unique finite value of $d$ no matter what values $w(k_i)$ take if the value of $\delta_j(i,k)$ is finite and independent of distinct $i$ and $k$ for every $j$, which is given by assumption. Hence we can set $d_h(x) = d$, $e_h(x) = e_1$ and by \eqref{stageidentity}, we write \eqref{eq.sigmasum} as
\begin{align*}
\sigma_h(x,y) &= y + h\sum_{i=1}^sb_iw(k_i) + h\sum_{i=1}^sb_iv(l_i(d_h,\ldots,d_h,y+he_h)) \\
                      &= (y + he_h) +h\sum_{i=1}^s b_i d_h  + h\sum_{i=1}^sb_iv(l_i(d_h,\ldots,d_h,y+he_h)) + h\left(\sum_{i=1}^s b_i(w(k_i)-d_h)-e_h\right) \\
                      &= \chi_h(d_h(x),y+he_h(x)) + hc_h(x),
\end{align*}
where $c_h(x) = \left(\sum_{i=1}^s b_i(w(k_i)-d_h)-e_h\right)$. The factorisation of the determinant is evident from the block structure of the Jacobian matrix  
\begin{equation*}
\phi_h^\prime(x,y) = \begin{pmatrix} \psi_h^\prime(x) & 0 \\ \star & \partial_y( \chi_h(d_h(x),y+he_h(x)) + hc_h(x) ) \end{pmatrix}.
\end{equation*}
\end{proof}
\end{lemma}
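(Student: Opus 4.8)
The plan is to exploit the block-triangular structure of $f$ exactly as in Lemma~\ref{foliationlemma}, but now accounting for the fact that the $y$-stages of the full method are \emph{not} those of $\chi_h$, and correcting for the discrepancy. First I would note that, since the first component $u(x)$ of $f$ is independent of $y$, the $x$-stages $k_i$ of $\phi_h$ satisfy precisely the stage equations of $\psi_h$; hence the $x$-output of $\phi_h$ is $\psi_h(x)$ and the upper-right block of the Jacobian vanishes. This reduces the problem to understanding the $y$-component, which I write as $\sigma_h(x,y)$, and mirrors the argument already used for one-stage methods.

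The heart of the matter is that the full $y$-stages satisfy
\begin{equation*}
l_i = y + h\sum_{j=1}^s a_{ij}w(k_j) + h\sum_{j=1}^s a_{ij}v(l_j),
\end{equation*}
whereas the stages of $\chi_h(c,y)$ for $\dot y = c + v(y)$ replace the varying forcing $w(k_j)$ by a single constant $c$. My idea is to split the forcing against an as-yet-undetermined constant $d$: using $c_i = \sum_j a_{ij}$,
\begin{equation*}
\sum_{j=1}^s a_{ij}w(k_j) = c_i d + e_i, \qquad e_i := \sum_{j=1}^s a_{ij}(w(k_j)-d),
\end{equation*}
so that the stage equation becomes $l_i = (y + he_i) + hc_i d + h\sum_j a_{ij}v(l_j)$. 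This has exactly the form of a $\chi_h$-stage equation with constant forcing $d$ and initial value shifted by $he_i$ --- except that the shift depends on the stage index $i$, so the stages do not yet combine into a single evaluation of $\chi_h$.

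The main obstacle, and the only place the hypothesis \eqref{2stagecondition} enters, is to remove this $i$-dependence by choosing $d$ so that all the shifts coincide. Imposing $e_i = e_k$ for distinct $i,k$ reduces to $\sum_j (a_{ij}-a_{kj})w(k_j) = d\,(c_i - c_k)$, whose solution is $d = \sum_j w(k_j)\,\delta_j(i,k)$. The crucial requirement is that this produce a single finite value of $d$ \emph{regardless} of the stage values $w(k_j)$ and of which distinct pair $(i,k)$ is chosen; this holds precisely when each $\delta_j(i,k)$ is finite and independent of $i,k$, which is exactly the standing assumption. Fixing such a $d =: d_h(x)$, all the $e_i$ collapse to a common value $e_h(x)$, so the stages genuinely coincide with those of $\chi_h(d_h,y+he_h)$; substituting into $\sigma_h(x,y) = y + h\sum_i b_i w(k_i) + h\sum_i b_i v(l_i)$ and comparing with $\chi_h(d_h,y+he_h)$ then leaves a residual $h c_h(x)$ that is independent of $y$. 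This yields the asserted factorisation, with $c_h(x) = \sum_i b_i(w(k_i)-d_h) - e_h$.

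The determinant formula is then immediate. Because the upper-right Jacobian block is zero and the additive term $h c_h(x)$ does not depend on $y$, the matrix $\phi_h'(x,y)$ is block lower-triangular with diagonal blocks $\psi_h'(x)$ and $\partial_y\chi_h(d_h(x),y+he_h(x))$, so its determinant is the product of theirs. I expect the genuine difficulty to be concentrated entirely in the well-definedness of $d$: once the compatibility condition on $\delta_j(i,k)$ is secured, the remaining steps are bookkeeping in the style of Lemma~\ref{foliationlemma}.
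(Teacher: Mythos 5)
Your proposal is correct and follows essentially the same route as the paper's proof: the same splitting $\sum_j a_{ij}w(k_j)=c_i d+e_i$, the same choice of $d$ forcing all $e_i$ to coincide (which is exactly where the hypothesis on $\delta_j(i,k)$ enters), the same residual $c_h(x)=\sum_i b_i(w(k_i)-d_h)-e_h$, and the same block-triangular argument for the determinant. If anything, you are slightly more careful than the paper in flagging that the stage shifts $he_i$ must be made index-independent before the coupled stage equations can be identified with those of a single evaluation of $\chi_h$.
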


\begin{remark}
 Let us shed some light on the meaning of \eqref{2stagecondition} being finite and independent of distinct $i$ and $k$ for each $j$. The finiteness implies that the method has $c_i \neq c_k$ for all $i \neq k$, which is known as \emph{nonconfluency} \cite{hairer2006geometric}. 
For two-stage SSRK methods, condition \eqref{2stagecondition} is satisfied if the method is consistent and $\Omega\neq0$. 
There is a one-parameter family of self-adjoint three-stage SSRK methods of order four that satisfy the condition. The three-stage Gauss-Legendre method, however, does not belong to this class.
\end{remark}

%\pb{The following remark intends to shed some light on the meaning of \eqref{2stagesumcondition} }
% \begin{remark}
%For two-stage SSRK methods, condition \eqref{2stagescondition} is satisfied if the method is consistent and $\Omega\neq0$.
% The case $\Omega=0$ corresponds to two identical stages and is effectively a one-stage method, the only symplectic one being the midpoint rule for which the result follows from Lemma~\ref{foliationlemma}. Other methods that satisfy the conditions are, e.g., the trapezoidal rule and a two-parameter family of second-order three-stage methods.
% %The finiteness condition means that no two stages are evaluated at the same time, $c_i\neq c_l$ for $i\neq l$ and $c_i=\sum_k a_{ik}$
% %\pb{The three-stage Gauss-Legendre method does not satisfy condition \eqref{2stagesumcondition} but nevertheless, it preserves volume for additively foliated Hamiltonians. However, there is a two-parameter family of methods that do at order 2 and a one-parameter family at order 4. The 3stage RK method with a zero row which was used in the previous paper to represent Kahan's method  violates the finiteness condition since $c_1=0$, however experiments indicate that it still preserves measure after addition.
% %}
% \end{remark}
%
\begin{definition}\label{linearfoliation}
A vector field $f:\R^{n+m} \to \R^{n+m}$ possesses a \emph{linear foliation} if there exists a linear change of variables as in Lemma \ref{linearchangeofvariables} such that $f$ is as in \eqref{prototypefoliation} from Lemma \ref{foliationlemma} for some functions $u$ and $v$. 
{Such vector fields are called \emph{linearly foliate}.}
See \cite{mclachlan2003lie} for general Lie group foliations in the context of Geometric Integration.
\end{definition}
\begin{remark}\label{SRK}
For general SRK methods, the condition in Lemma \ref{SSRK} along with the condition with $A$ replaced by $A-\mathbbm{1}b^\top$ is sufficient for volume preservation. This result can be obtained along the lines of \cite[Thm.~9.4]{hairer2006geometric} regarding separable systems (HLW), as follows. Consider the foliation $\dot{x} = f(x)$, $\dot{y} = -f^\prime(x)^\top y$, which is Hamiltonian with respect to $H(x,y) = y^\top f(x)$. Then, using the notation of Lemma \ref{foliationsum}, the Jacobian matrix of the Runge-Kutta map has block structure $\left(\begin{array}{cc} \phi_h^\prime(x) & 0 \\ \star & \partial_y \sigma_h(x,y) \end{array} \right)$. As in \cite[Thm.~9.4]{hairer2006geometric}, since the vector field is Hamiltonian, a SRK method will produce a symplectic map, which implies $\det(\phi_h^\prime(x))\det(\partial_y \sigma_h(x,y)) = 1$. Hence to show that $\det(\phi_h^\prime(x))=1$ it suffices to show that $\det(\phi_h^\prime(x)) = \det(\partial_y\sigma_h(x,y))$. Computing these two sides as in Lemma \ref{basicdeterminantlemma}, using the block determinant relation and equating numerators and denominators, gives the 2 conditions mentioned above.
\end{remark}

\section{Classification of volume preserving vector fields}

\begin{definition}
Define the following classes of vector fields on Euclidean space recursively using vector fields $f(x,y)=(u(x),v(x,y))^\top$ possessing linear foliations as in Definition~\ref{linearfoliation}.
%\pb{We might have to highlight that the recursive process requires the same $P$ for all $x$ as $y\to v(x,y)$}  
\begin{align*}
\mathcal{H} &= \left\{ \text{$f$ such that there exists } P \text{ such that for all } x, Pf^\prime(x)P^{-1}=-f^\prime(x)^\top \right\}, \\
\mathcal{S} &= \left \{ \text{$f$ such that there exists } P \text{ such that for all  } x, Pf^\prime(x)P^{-1}=-f^\prime(x) \right\}, \\
 \mathcal{F}^{(\infty)} &= \left\{ f(x,y)=(u(x),v(x,y))^\top \text{ where } u\in\mathcal{H}\cup\mathcal{F}^{(\infty)} \text{ and there exists } P \text{ such that for all } x, y\right.
\\ & \left.
\qquad P\partial_y v(x,y)P^{-1}=-\partial_y v(x,y)^\top %\ y\mapsto v(x,y) \in \mathcal{H} \cup \mathcal{F}^{(\infty)} \text{ for all } x
\right\}, \\
\mathcal{F}^{(2)} &=\left\{ 
%\text{linear foliations such that } 
f(x,y)=(u(x),v(x,y))^\top \text{ where } 
u\in\mathcal{S}\cup\mathcal{H}\cup\mathcal{F}^{(2)} 
\text{ and there exists } P \text{ such that for all } x, y\right.
\\ & \qquad \left. \text{ either } 
P\partial_y v(x,y)P^{-1}=-\partial_y v(x,y)^\top 
\text{ or }
P\partial_y v(x,y)P^{-1}=-\partial_y v(x,y)
%\exists P: \forall x,y : P\partial_y v(x,y)P^{-1}=-\partial_y v(x,y)
%y\mapsto v(x,y) \in \mathcal{S}\cup\mathcal{H}\cup\mathcal{F}^{(2)} \, \text{ for all } x
\right\}, \\
 \mathcal{D} &= \left\{ \text{vector fields satisfying } \det(I+\frac{h}{2}f^\prime(x))=\det(I-\frac{h}{2}f^\prime(x)) \text{ for all } h>0 \text{ and all }x \right\}.
\end{align*}
\end{definition}

\begin{lemma}\label{Hlemma}
 The set $\mathcal{H}$ contains all vector fields of the form $f(x) = J^{-1}\nabla{H}(x)$ where $J$ is constant and skew-symmetric. All SRK methods are volume preserving for vector fields in $\mathcal{H}$.
 \begin{proof}
  For the first part, note that if $f(x) = J^{-1}\nabla{H}(x)$, then $Jf^\prime(x)J^{-1} = \nabla^2H(x)J^{-1} = - f^\prime(x)^\top$. For the second part, let $A \in \R^{s\times s}$ and $P$ be such that for all $x$, $Pf'(x)P^{-1} = -f'(x)^\top$. Then using the notation of Lemma \ref{SSRK},
  \begin{eqnarray}
   \det(I_s\otimes I - h(A\otimes I)F) &=& \det(I_s\otimes I - h(I_s\otimes P)(A \otimes I)(I_s \otimes P^{-1})(I_s\otimes P)F(I_s\otimes P^{-1})) \\
                                       &=& \det(I_s\otimes I + h(A\otimes I)F^\top) \\
                                       &=& \det(I_s\otimes I + hF(A^\top\otimes I) \text{ (transpose)} \\
                                       &=& \det(I_s \otimes I + h(A^\top\otimes I) F) \text{ (Sylvester's law)}.
  \end{eqnarray}
By Lemma \ref{SSRK} and Remark \ref{SRK}, all SRK methods are volume preserving.
 \end{proof}
\end{lemma}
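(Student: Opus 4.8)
The plan is to verify the two assertions separately, treating the membership claim as a short computation and devoting the bulk of the effort to the volume-preservation claim. For membership, I would take the conjugating matrix to be $P = J$ itself. Writing $f^\prime(x) = J^{-1}\nabla^2 H(x)$ and using that $J$ is skew-symmetric, so $(J^{-1})^\top = -J^{-1}$, together with the symmetry of the Hessian $\nabla^2 H(x)$, a direct calculation gives $J f^\prime(x) J^{-1} = \nabla^2 H(x) J^{-1} = -f^\prime(x)^\top$. This is exactly the defining condition of $\mathcal{H}$, so every such $f$ lies in $\mathcal{H}$.

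For the volume-preservation claim, the strategy is to check the criterion of Lemma \ref{SSRK}, namely that $\det(I_s\otimes I - h(A\otimes I)F) = \det(I_s\otimes I + h(A^\top\otimes I)F)$ whenever $f \in \mathcal{H}$. The key observation is that conjugating the block-diagonal matrix $F = \diag(f^\prime(k_1),\ldots,f^\prime(k_s))$ by $I_s \otimes P$ acts blockwise through $Pf^\prime(k_i)P^{-1} = -f^\prime(k_i)^\top$, so that $(I_s\otimes P)F(I_s\otimes P^{-1}) = -F^\top$. I would then start from the left-hand determinant, insert this conjugation (which leaves the determinant unchanged), and exploit the fact that $I_s\otimes P$ commutes with $A\otimes I$ to slide the conjugation onto $F$ alone; this converts the left-hand side into $\det(I_s\otimes I + h(A\otimes I)F^\top)$. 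Taking a transpose inside the determinant and then applying Sylvester's identity $\det(I + MN) = \det(I + NM)$ yields precisely the right-hand side $\det(I_s\otimes I + h(A^\top\otimes I)F)$, establishing volume preservation for SSRK methods by Lemma \ref{SSRK}.

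To pass from SSRK methods to general SRK methods, I would invoke Remark \ref{SRK}: it suffices to prove the same determinant identity with $A$ replaced by $A - \mathbbm{1}b^\top$. Since the argument above uses no structural property of $A$ beyond its being a fixed matrix, the identical chain of manipulations applies verbatim with $A - \mathbbm{1}b^\top$ in place of $A$, and the conclusion follows. The main obstacle I anticipate is not conceptual but notational: keeping the Kronecker-product bookkeeping straight — in particular verifying that $I_s\otimes P$ passes through $A\otimes I$ and correctly conjugates $F$ into $-F^\top$ — and sequencing the conjugation, transposition, and Sylvester steps so that the two sides of the Lemma \ref{SSRK} criterion line up exactly.
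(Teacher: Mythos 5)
Your proposal is correct and follows essentially the same route as the paper: take $P=J$ for membership, then conjugate $F$ by $I_s\otimes P$ blockwise, transpose, and apply Sylvester's identity to verify the criterion of Lemma \ref{SSRK}, extending to general SRK methods via Remark \ref{SRK} since the computation holds for an arbitrary matrix $A$. Your explicit observation that $(I_s\otimes P)F(I_s\otimes P^{-1})=-F^\top$ and that the argument applies verbatim with $A-\mathbbm{1}b^\top$ in place of $A$ merely spells out steps the paper leaves implicit.
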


\begin{lemma}\label{Slemma}
The set $\mathcal{S}$ contains all separable HLW systems i.e.~$f(x,y) = (u(y),v(x))^\top$. All SRK methods with at most 2 stages, and compositions thereof, are volume preserving for vector fields in $\mathcal{S}$. 
\begin{proof}
For the first part, note that if $f(x,y) = (u(y),v(x))^\top$, then $Df^\prime(x,y)D^{-1} = -f^\prime(x,y)$ where $D = \diag(I_m,-I_n)$. For the second part, let $A \in \R^{2\times2}$ and $P$ be such that for all $x$ $Pf'(x)P^{-1} = -f'(x)$. Then for the two stages $k_1$, $k_2$ of the SRK method,
\begin{multline*}
\det(I - ha_{11}f^\prime(k_1) - ha_{22}f^\prime(k_2) + h^2\det(A)f^\prime(k_1)f^\prime(k_2)) \\
= \det(I - ha_{11}Pf^\prime(k_1)P^{-1} - ha_{22}Pf^\prime(k_2)P^{-1} + h^2 \det(A) Pf^\prime(k_1)P^{-1} Pf^\prime(k_2)P^{-1}) \\
= \det(I + ha_{11}f^\prime(k_1) + ha_{22}f^\prime(k_2) + h^2\det(A)f^\prime(k_1)f^\prime(k_2)).
\end{multline*}
By \eqref{2stagecond} and Remark \ref{SRK}, all 2-stage SRK methods are volume preserving. To complete the proof, note that a 1-stage SRK method is equivalent to a 2-stage SRK method with two equal stages, and compositions of volume preserving maps are also volume preserving.
\end{proof}
\end{lemma}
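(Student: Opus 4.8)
The plan is to treat the two assertions of the lemma separately. For the set-membership claim I would exhibit the conjugating matrix explicitly. Writing an HLW system as $f(x,y)=(u(y),v(x))^\top$ with $x\in\R^m$ and $y\in\R^n$, its Jacobian is block-antidiagonal, $f^\prime(x,y)=\left(\begin{smallmatrix} 0 & u^\prime(y) \\ v^\prime(x) & 0\end{smallmatrix}\right)$, so conjugating by the constant involution $D=\diag(I_m,-I_n)$ negates both off-diagonal blocks and gives $Df^\prime(x,y)D^{-1}=-f^\prime(x,y)$ for every $(x,y)$. This produces the matrix $P=D$ demanded by the definition of $\mathcal{S}$, so $\mathcal{S}$ contains every HLW system.

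For the volume-preservation claim I would reduce to the two-stage determinant criterion \eqref{2stagecond}. By Lemma~\ref{SSRK} this condition is equivalent to volume preservation for two-stage SSRK methods, so it suffices to verify \eqref{2stagecond} at the internal stages $k_1,k_2$. I would do this by conjugating the left-hand matrix of \eqref{2stagecond} by $P$: determinants are similarity-invariant, and since $Pf^\prime(k_i)P^{-1}=-f^\prime(k_i)$ the two linear terms flip sign while the quadratic cross term $h^2\det(A)f^\prime(k_1)f^\prime(k_2)$ stays fixed, because $(Pf^\prime(k_1)P^{-1})(Pf^\prime(k_2)P^{-1})=f^\prime(k_1)f^\prime(k_2)$ absorbs two cancelling sign changes. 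The conjugated matrix is precisely the right-hand matrix of \eqref{2stagecond}, so the two determinants agree. Running the same block-determinant reduction and conjugation on the companion condition of Remark~\ref{SRK}, with $A$ replaced by $A-\mathbbm{1}b^\top$, upgrades the conclusion from special to general two-stage SRK methods.

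The remaining cases I would dispatch quickly: a one-stage method coincides with a two-stage method having $k_1=k_2$ and is therefore already covered, and a composition of volume-preserving maps is volume-preserving since the Jacobian determinant is multiplicative under composition. The step I expect to be the crux --- and the one that pins the hypothesis to at most two stages --- is the cancellation in the quadratic term: it is exactly because $f^\prime(k_1)f^\prime(k_2)$ collects an even number of sign flips under conjugation by $P$ that one side of \eqref{2stagecond} is carried onto the other. For $s\ge 3$ the determinant criterion no longer collapses to a polynomial in the $f^\prime(k_i)$ whose cross terms all have even degree (the relevant blocks cease to commute, as noted just after \eqref{2stagecond}), so this sign-flip symmetry is lost and the argument does not extend.
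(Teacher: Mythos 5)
Your proposal is correct and follows essentially the same route as the paper: exhibit $D=\diag(I_m,-I_n)$ for the HLW inclusion, then conjugate the two-stage determinant condition \eqref{2stagecond} by $P$ so the linear terms flip sign while the quadratic cross term is invariant, and finish with the one-stage and composition remarks. Your explicit treatment of the companion condition with $A-\mathbbm{1}b^\top$ from Remark~\ref{SRK} and the observation about why the argument fails for $s\ge 3$ are slightly more detailed than the paper's, but the substance is identical.
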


\begin{lemma}\label{lemmaSets}
The inclusions $\mathcal{H} \subset \mathcal{F}^{(\infty)} \subset \mathcal{F}^{(2)} \subset \mathcal{D}$ and $\mathcal{S} \subset \mathcal{F}^{(2)} \subset \mathcal{D}$ hold.
\begin{proof}
$\mathcal{H} \subset \mathcal{F}^{(\infty)} \subset \mathcal{F}^{(2)}$ and $\mathcal{S} \subset \mathcal{F}^{(2)}$ are clear by considering trivial foliations in which $n+m=m$.  We will show that $\mathcal{S}\subset\mathcal{D}$, $\mathcal{H}\subset\mathcal{D}$ and that $\mathcal{D}$ is closed under the employed recursive process leading to linearly foliate systems.

For $f \in \mathcal{S}$, $\det(I+\frac{h}{2}f^\prime(x))=\det(I+\frac{h}{2}Pf^\prime(x)P^{-1}) = \det(I-\frac{h}{2}f^\prime(x))$. 

For $f \in \mathcal{H}$, $\det(I+\frac{h}{2}f^\prime(x))=\det(I+\frac{h}{2}Pf^\prime(x)P^{-1}) = \det(I-\frac{h}{2}f^\prime(x)^\top)=\det(I-\frac{h}{2}f^\prime(x))$. 

Let $f \in \mathcal{D}$ and define $\tilde{f}(x)=Pf(P^{-1}x)$ for an invertible matrix $P$. Then $\det(I+\frac{h}{2}\tilde{f}^\prime(x)) = \det(I+\frac{h}{2}Pf^\prime(P^{-1}x)P^{-1}) = \det(I+\frac{h}{2}f^\prime(P^{-1}x)$. Doing the same with a $-$ instead of a $+$ shows that $\tilde{f} \in\mathcal{D}$.

Let $f(x,y) = (u(x),v(x,y))^\top$ where $u \in\mathcal{D}$ and $y\mapsto v(x,y) \in \mathcal{D}$ for all $x$. Then
\begin{eqnarray}
\det(I+\frac{h}{2}f^\prime(x,y)) &=& \det\begin{pmatrix} I +\frac{h}{2} u^\prime(x) & 0 \\ \frac{h}{2}\partial_xv(x,y) & I + \frac{h}{2}\partial_yv(x,y) \end{pmatrix} \\
&=& \det(I+\frac{h}{2}u^\prime(x))\det(I+\frac{h}{2}\partial_yv(x,y)).
\end{eqnarray}
Doing the same with a $-$ instead of a $+$ shows that $f \in \mathcal{D}$.
\end{proof}
\end{lemma}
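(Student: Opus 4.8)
The plan is to prove the inclusions in two groups: the ``soft'' inclusions among the foliation classes, which follow purely from the set-theoretic structure of the recursive definitions, and the concluding inclusion into $\mathcal{D}$, which requires checking that the determinant condition is stable under each operation used to build up $\mathcal{F}^{(2)}$.

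First I would dispatch the soft inclusions $\mathcal{H} \subset \mathcal{F}^{(\infty)}$, $\mathcal{F}^{(\infty)} \subset \mathcal{F}^{(2)}$ and $\mathcal{S} \subset \mathcal{F}^{(2)}$ by exhibiting trivial foliations. Given $f \in \mathcal{H}$, one views $f$ as a linearly foliate field with empty $y$-component (i.e.\ $n=0$ in the splitting $\R^{n+m} = \R^n \times \R^m$), so that $f = u \in \mathcal{H}$ and the condition on $\partial_y v$ is vacuous; this places $f$ in $\mathcal{F}^{(\infty)}$, and the same device gives $\mathcal{S} \subset \mathcal{F}^{(2)}$. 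The inclusion $\mathcal{F}^{(\infty)} \subset \mathcal{F}^{(2)}$ I would obtain by structural induction on recursion depth, using that the defining requirements for $\mathcal{F}^{(\infty)}$ are strictly tighter than those for $\mathcal{F}^{(2)}$ at every level: the base component satisfies $u \in \mathcal{H} \subset \mathcal{S}\cup\mathcal{H}$, and the Hamiltonian-type condition imposed on $\partial_y v$ is precisely one of the two alternatives permitted in $\mathcal{F}^{(2)}$.

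The substantive work is $\mathcal{F}^{(2)} \subset \mathcal{D}$, which I would establish by structural induction on the recursive definition of $\mathcal{F}^{(2)}$, showing that $\mathcal{D}$ contains the base classes $\mathcal{S}$ and $\mathcal{H}$ and is closed under the two operations used to generate linearly foliate fields. For $f \in \mathcal{S}$, conjugation invariance of the determinant gives $\det(I + \tfrac{h}{2}f') = \det(I + \tfrac{h}{2}Pf'P^{-1}) = \det(I - \tfrac{h}{2}f')$ directly from $Pf'P^{-1} = -f'$; for $f \in \mathcal{H}$ one additionally invokes $\det M = \det M^\top$ to absorb the transpose in $Pf'P^{-1} = -f'^\top$. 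Closure under linear change of variables is immediate from Lemma \ref{linearchangeofvariables}, since the conjugation $\tilde f'(x) = Pf'(P^{-1}x)P^{-1}$ leaves both determinants unchanged; this step is needed because ``possessing a linear foliation'' in Definition \ref{linearfoliation} permits an initial linear change of coordinates. The decisive closure property is under the foliation step itself: if $u \in \mathcal{D}$ and $v(x,\cdot) \in \mathcal{D}$ for each fixed $x$, then the block lower-triangular form of $f'(x,y)$ factorises $\det(I \pm \tfrac{h}{2}f') = \det(I \pm \tfrac{h}{2}u')\,\det(I \pm \tfrac{h}{2}\partial_y v)$, so the determinant identity holds factorwise and hence for the product. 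The inductive hypothesis is fed by the observation that the symmetry condition on $\partial_y v$ placed in the definition of $\mathcal{F}^{(2)}$ (respectively $\mathcal{F}^{(\infty)}$) is exactly the statement that $y \mapsto v(x,y)$ lies in $\mathcal{S}$ or $\mathcal{H}$ (respectively $\mathcal{H}$) for each $x$, which by the base cases already lies in $\mathcal{D}$.

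I expect the main obstacle to be bookkeeping rather than any single hard estimate: one must set up the structural induction so that the hypothesis ``$u \in \mathcal{D}$, and the symmetry condition on $\partial_y v$ forces $v(x,\cdot) \in \mathcal{D}$'' correctly drives the block-triangular factorisation at every level of the recursion, and verify that each alternative allowed in the definition of $\mathcal{F}^{(2)}$ reduces to one of the two already-established base cases. The determinant manipulations themselves are routine applications of conjugation invariance, transpose invariance, and the block-triangular factorisation of the determinant.
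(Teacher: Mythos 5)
Your proposal is correct and follows essentially the same route as the paper: trivial foliations for the soft inclusions, conjugation (plus transpose) invariance of the determinant for $\mathcal{S},\mathcal{H}\subset\mathcal{D}$, and closure of $\mathcal{D}$ under linear changes of variables and the block-triangular foliation step. Your explicit framing as a structural induction, with the remark that the symmetry condition on $\partial_y v$ places $v(x,\cdot)$ in $\mathcal{S}$ or $\mathcal{H}$ and hence in $\mathcal{D}$, just makes precise what the paper leaves implicit.
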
\begin{figure}\centering
\begin{tikzpicture}[yscale=1.3]
% left hand
\draw[rounded corners=5mm] (0,0) rectangle (15,2.5);
\draw[rounded corners=5mm] (.3,.2) rectangle (12,2.3);
\draw[rounded corners=3mm] (.6,.4) rectangle (9,2.1);
\draw[rounded corners=3mm] (.9,.6) rectangle (6,1.9);
\draw[rounded corners=3mm,fill=gray!20] (3,.8) rectangle (11,1.3);
\node at (3.5,1.6) {$\mathcal{H}$};
\node at (7.5,1.6) {$\mathcal{F}^{(\infty)}$};
\node at (10.5,1.6) {$\mathcal{F}^{(2)}$};
\node at (13.5,1.6) {$\mathcal{D}$};
\node at (7,1.05) {$\mathcal{S}$};
\end{tikzpicture}
%\begin{tikzpicture}[yscale=.7]
%\draw (0,0) ellipse (7cm and 2.5cm);
%\draw (-1.5,0) ellipse (5cm and 2cm);
%\draw (-3,0) ellipse (3cm and 1.65cm);
%\draw (-4.5,0) ellipse (1cm and 1cm);
%\draw[fill=gray!20,rotate around={0:(-2,0)}] (-2,-.5) ellipse (3cm and .5cm);%[rotate around={-20:(-3,0)},red]
%\node at (-4.5,.5) {$\mathcal{H}$};
%\node at (-1.8,.5) {$\mathcal{F}^{(\infty)}$};
%\node at (2.2,.5) {$\mathcal{F}^{(2)}$};
%\node at (5,.5) {$\mathcal{D}$};
%\node at (-2,-.5) {$\mathcal{S}$};
%\end{tikzpicture}
\caption{Venn diagramm illustrating the relationships established by Lemma~\ref{lemmaSets}.}
\end{figure}
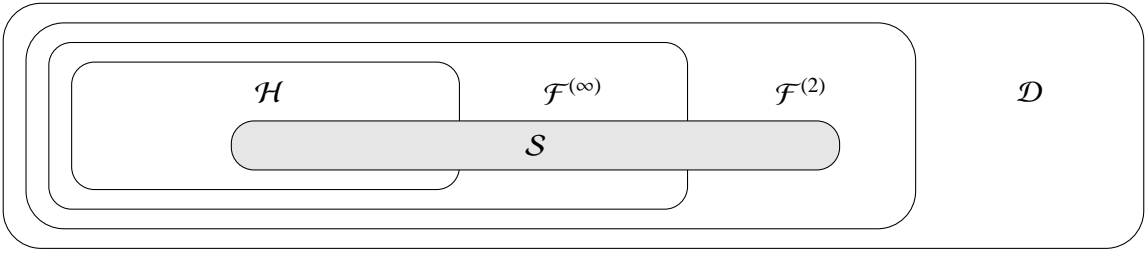
\begin{theorem}\label{dettheorem}
The following are equivalent.
\begin{enumerate}[(i)]
\item $f \in \mathcal{D}$
\item $\det(I+zf^\prime(x)) = \det(I-zf^\prime(x))$ for all $z\in\mathbb{C}$ and all $x$
\item The non-zero eigenvalues of $f^\prime(x)$, counting multiplicities, come in positive-negative pairs
\item $\tr(f^\prime(x)^{2k+1})=0$ for all $x$ and $k = 0,1,2,\ldots$
\end{enumerate}
\begin{proof}
$(i)\iff(ii)$: Assuming $(i)$, for every $x$, $p(z) = \det(I+zf^\prime(x))-\det(I-zf^\prime(x))$ is a polynomial in $z$ that is zero for infinitely many values of $z = h/2 \in \R_+$. By the Fundamental Theorem of Algebra, $p(z) = 0$ for all $z \in \mathbb{C}$. The converse follows from setting $h = 2z \in \R_+\subset\mathbb{C}$.

$(ii)\implies(iii)$: By triangularisation we can see that for every $x$, the polynomial $q(z) = \det(I-zf^\prime(x))$ is equal to $(1-z\lambda_1)\cdots(1-z\lambda_r)$ where $\lambda_1,\ldots,\lambda_r$ are the non-zero eigenvalues of $f^\prime(x)$. If $(i)$ holds, then $q(z) = q(-z)$, and the roots $1/\lambda_i$ of $q$ come in positive-negative pairs. Hence the eigenvalues $\lambda_i$ do too.

$(iii)\implies(iv)$: For all $x$, $\tr(f^\prime(x)^{2k+1}) = \lambda_1^{2k+1}+\cdots+\lambda_r^{2k+1}$ where $\lambda_1,\ldots,\lambda_r$ are the non-zero eigenvalues of $f^\prime(x)$. Hence if the non-zero eigenvalues come in positive-negative pairs then $\tr(f^\prime(x)^{2k+1}) = 0$ for $k=0,1,2,\ldots$.

$(iv)\implies(ii)$: Newton\rq{}s identity gives
\begin{equation}
e_{2k+1}(\lambda_1,\ldots,\lambda_n) = \frac{1}{2k+1}\sum_{i=1}^{2k+1} (-1)^{i-1} e_{2k+1-i}(\lambda_1,\ldots,\lambda_n)\tr(f^\prime(x)^i),
\end{equation}
where $e_j(\lambda_1,\ldots,\lambda_n)$ is the elementary symmetric polynomial in $\lambda_1,\ldots,\lambda_n$ and, incidentally, the coefficient of $z^j$ in $q(z) = \det(I-zf^\prime(x))$. Since for any $k$ and $i$, either $2k+1-i$ is odd or $i$ is odd, we can use an induction argument to show that all the coefficients of $z^{2k+1}$ in $q(z)$ are zero. Hence $q(z)=q(-z)$.
\end{proof}
\end{theorem}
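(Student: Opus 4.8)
The plan is to fix an arbitrary point $x$ and abbreviate $M := f'(x)$, so that each of the four conditions becomes a statement about the single real $n\times n$ matrix $M$; since every condition quantifies over all $x$, establishing the matrix-level equivalence for each fixed $x$ yields the theorem. I would prove (i)--(iv) equivalent by showing $(i)\Leftrightarrow(ii)$ directly and then closing the cycle $(ii)\Rightarrow(iii)\Rightarrow(iv)\Rightarrow(ii)$; because (i) and (ii) are already tied together, this makes all four equivalent.

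For $(i)\Leftrightarrow(ii)$, the key observation is that $p(z) := \det(I+zM) - \det(I-zM)$ is a polynomial in $z$ of degree at most $n$. Condition (i) asserts that $p$ vanishes at every $z = h/2$ with $h>0$, i.e.\ on an infinite subset of $\R$, so by the fundamental theorem of algebra $p$ is the zero polynomial and (ii) follows; the converse is immediate by restricting $z$ to $\R_+\subset\mathbb{C}$.

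For $(ii)\Rightarrow(iii)$ and $(iii)\Rightarrow(iv)$ I would pass to eigenvalues. Triangularising $M$ shows $q(z) := \det(I-zM) = \prod_i (1-z\lambda_i)$, where $\lambda_1,\ldots,\lambda_r$ are the non-zero eigenvalues of $M$ (zero eigenvalues contribute trivial factors). Condition (ii) reads $q(z)=q(-z)$, an evenness statement whose roots $1/\lambda_i$ --- and hence the $\lambda_i$ --- must occur in $\pm$ pairs, giving (iii). From (iii), since $\tr(M^{2k+1})$ equals the power sum $\sum_i \lambda_i^{2k+1}$ and $\lambda^{2k+1}+(-\lambda)^{2k+1}=0$, every odd power sum vanishes, which is (iv).

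The step I expect to be the main obstacle is $(iv)\Rightarrow(ii)$, since it must run the eigenvalue bookkeeping backwards: from vanishing of the odd power sums $p_i := \tr(M^i)$ I need to recover evenness of $q$. Here I would invoke Newton's identities, $(2k+1)\,e_{2k+1} = \sum_{i=1}^{2k+1}(-1)^{i-1} e_{2k+1-i}\, p_i$, where $e_j$ is the $j$th elementary symmetric polynomial in the eigenvalues and, up to sign, the coefficient of $z^j$ in $q(z)$. The crux is a parity induction: in each summand the indices $i$ and $2k+1-i$ have opposite parity, so either $p_i=0$ (by (iv), when $i$ is odd) or $e_{2k+1-i}=0$ (by the inductive hypothesis, when $2k+1-i$ is odd). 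Starting from the base case $e_1 = p_1 = 0$, this forces $e_{2k+1}=0$ for all $k$, so $q(z)$ contains only even powers of $z$ and $q(z)=q(-z)$, which is (ii).
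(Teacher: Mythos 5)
Your proposal is correct and follows essentially the same route as the paper: the same polynomial-identity argument via the fundamental theorem of algebra for $(i)\Leftrightarrow(ii)$, the same triangularisation and root-pairing for $(ii)\Rightarrow(iii)\Rightarrow(iv)$, and the same parity induction on Newton's identities for $(iv)\Rightarrow(ii)$. No gaps; your explicit mention of the base case $e_1=p_1=0$ and the sign convention for the coefficients of $q$ is, if anything, slightly more careful than the paper's own write-up.
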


\begin{corollary}
All elements of $\mathcal{D}$ are divergence free. Restricted to 2 dimensional vector fields, $\mathcal{D}$, $\mathcal{H}$ and $\mathcal{S}$ are all equal to divergence free vector fields.
\end{corollary}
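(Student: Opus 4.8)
The plan is to handle the two assertions separately: the first follows at once from Theorem~\ref{dettheorem}, and the second from closing a chain of inclusions that Lemma~\ref{lemmaSets} has already half-built.

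For the first sentence I would use the equivalence $(i)\Leftrightarrow(iv)$ of Theorem~\ref{dettheorem}: if $f\in\mathcal{D}$ then $\tr(f'(x)^{2k+1})=0$ for all $x$ and all $k\ge 0$, and specialising to $k=0$ gives $\tr(f'(x))=0$. Since the divergence of $f$ at $x$ is $\sum_i\partial_{x_i}f_i(x)=\tr(f'(x))$, this is exactly the statement that $f$ is divergence free. Nothing further is required.

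For the second sentence, fix $n=2$ and write ``div-free'' for the divergence free fields. Lemma~\ref{lemmaSets} already supplies $\mathcal{H}\subset\mathcal{D}$ and $\mathcal{S}\subset\mathcal{D}$, and the first sentence gives $\mathcal{D}\subset$~div-free. Hence everything reduces to the reverse inclusions div-free~$\subset\mathcal{H}$ and div-free~$\subset\mathcal{S}$, each of which collapses its tower to equality. In the plane, divergence free means $\tr(f'(x))=0$ for all $x$, so the problem becomes one about $2\times2$ traceless matrices. For the $\mathcal{H}$ inclusion I would take $J=\left(\begin{smallmatrix}0&1\\-1&0\end{smallmatrix}\right)$ and use the adjugate identity $JMJ^{-1}=(\tr M)I-M^\top$, valid for every $2\times2$ matrix $M$. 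When $\tr M=0$ this reads $JMJ^{-1}=-M^\top$, so the single constant matrix $P=J$ satisfies $Pf'(x)P^{-1}=-f'(x)^\top$ for all $x$; thus $f\in\mathcal{H}$, in keeping with the classical fact that a planar divergence free field is Hamiltonian for a stream function (cf.\ Lemma~\ref{Hlemma}). This gives div-free~$\subset\mathcal{H}\subset\mathcal{D}\subset$~div-free, i.e.\ $\mathcal{H}=\mathcal{D}=$~div-free.

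I expect the $\mathcal{S}$ inclusion to be the delicate step and the main obstacle. Pointwise it is easy: a traceless $2\times2$ matrix $M$ has characteristic polynomial $\lambda^2+\det M$, identical to that of $-M$, so $M$ and $-M$ are similar and for each individual $x$ some $P_x$ conjugates $f'(x)$ to $-f'(x)$ (for a field with constant Jacobian this already places $f$ in $\mathcal{S}$). The real issue is uniformity: the definition of $\mathcal{S}$ demands one constant $P$ valid for all $x$ simultaneously. Here genuine care is needed, because conjugation $M\mapsto PMP^{-1}$ is a Lie-algebra automorphism of $\mathfrak{sl}_2$ whereas $M\mapsto-M$ is not, so no single $P$ can act as negation on all of $\mathfrak{sl}_2$. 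The proof must therefore exploit that, for the divergence free field at hand, the family $\{f'(x)\}_x$ admits a common conjugating $P$; I would establish this using the available structure — e.g.\ the off-diagonal (separable/HLW) form of Lemma~\ref{Slemma}, for which $P=\diag(1,-1)$ works — and treat this uniformity, rather than the pointwise similarity, as the heart of the matter.
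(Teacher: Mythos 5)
The paper offers no proof of this corollary, so there is nothing to compare against line by line; judged on its own, your argument is complete and correct for two of the three claims. Taking $k=0$ in Theorem~\ref{dettheorem}(iv) gives $\tr(f'(x))=0$, i.e.\ $f$ is divergence free, which settles the first sentence. In the plane, $\mathcal{D}=\{\text{div-free}\}$ is immediate since $\det(I\pm zf'(x))=1\pm z\tr(f'(x))+z^2\det(f'(x))$, and your use of the adjugate identity $Jf'(x)J^{-1}=\tr(f'(x))I-f'(x)^\top=-f'(x)^\top$ exhibits the single constant $P=J$ demanded by the definition of $\mathcal{H}$; this is exactly the statement that planar divergence free fields have a stream function, consistent with Lemma~\ref{Hlemma}.

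The gap is the $\mathcal{S}$ inclusion, and you have correctly located it but cannot close it as proposed, because the deferred step --- that the family $\{f'(x)\}_x$ admits a common conjugating $P$ --- fails for generic planar divergence free fields. For a fixed invertible $P$, the set $\{M:PMP^{-1}=-M\}$ is the $(-1)$-eigenspace of $\mathrm{Ad}_P$, which for $2\times2$ matrices is nonzero only when the eigenvalues of $P$ satisfy $p_1=-p_2$ and is then $2$-dimensional; since it is automatically contained in the $3$-dimensional space of traceless matrices, no constant $P$ can exist once the Jacobians $f'(x)$ span all of $\mathfrak{sl}_2$. Concretely, $f(x,y)=(x^2,-2xy)$ is divergence free with $f'(x,y)=\left(\begin{smallmatrix}2x&0\\-2y&-2x\end{smallmatrix}\right)$; any $P$ satisfying $P\,\diag(1,-1)\,P^{-1}=-\diag(1,-1)$ must be antidiagonal, and such a $P$ conjugates $\left(\begin{smallmatrix}0&0\\1&0\end{smallmatrix}\right)$ to a strictly upper triangular matrix, never to its negative. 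Hence $f\notin\mathcal{S}$ under the stated uniform-$P$ definition, and the corollary's claim about $\mathcal{S}$ is only tenable if the quantifiers in the definition of $\mathcal{S}$ are read pointwise (``for each $x$ there exists $P_x$''), in which case your observation that a traceless $2\times2$ matrix has even characteristic polynomial, and is therefore similar to its negative, already finishes the argument. As written, your proposal proves two of the three equalities and correctly isolates, but cannot repair, the defect in the third.
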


\begin{theorem}
The set $\mathcal{F}^{(\infty)}$ contains all
\begin{enumerate}[(i)]
\item Affine vector fields $f(x) = Lx + d$ such that $\det(I+\frac{h}{2}L)=\det(I-\frac{h}{2}L)$ for all $h>0$
\item Vector fields such that $f^\prime(x)=JS(x)$ where $J$ is skew-symmetric and $S(x)$ is symmetric
\end{enumerate}
\begin{proof}
(i) Let $L$ satisfy the determinant condition \eqref{det}. By the Jordan normal form, and the fact that the eigenvalues must come in positive-negative pairs by Theorem \ref{dettheorem}, we can find an invertible matrix $P$ such that
\begin{equation*}
PLP^{-1} = \diag(\lambda_1I + N_1, -\lambda_1I + N_{-1}, \lambda_2I + N_2, -\lambda_{2}I + N_{-2},\ldots, \lambda_rI + N_r, -\lambda_r I + N_{-r}, N_0),
\end{equation*}
where the $N_k$ are matrices that are zero everywhere except for possible 1's on the first subdiagonal ${(N_k)}_{i+1,i}$. Hence $f$ is a tower of linear foliations of affine functions with Jacobian matrices either $N_0$ or $\diag(\lambda I + N_1, -\lambda I + N_{-1})$. If $f(x) = N_0 x + d$ then this is clearly a tower of foliations of zero systems i.e. $u(x) = 0$, $v(x,y) = x$. Now consider the case $f(x) = \diag(\lambda I + N_1, -\lambda I + N_{-1})x + d$. There is a simple permutation of variables so that the Jacobian matrix becomes
\begin{equation*}
\begin{pmatrix}
\lambda & 0 & 0 & 0 & 0 & 0 & 0 \\
0 & -\lambda & 0 & 0 & 0 & 0 & 0 \\
\star & 0 & \lambda & 0 & 0 & 0 & 0 \\
0 & \star & 0 & -\lambda & 0 & 0 & 0 \\
\vdots & \vdots & \ddots & \ddots &\ddots &\ddots & \vdots\\
0 & 0 & \cdots & \star & 0 & \lambda & 0 \\
0 & 0 & \cdots & 0 & \star & 0 & -\lambda
\end{pmatrix},
\end{equation*}
where the $\star$'s are possible 1's (0 otherwise). Hence $f$ is a tower of linear foliations of harmonic oscillators, $u(x_1,x_2) = (\lambda x_1, -\lambda x_2)$, $v(x_1,x_2,y_1,y_2) = (\star x_1 + \lambda y_1, \star x_2 -\lambda y_2)$.

(ii) By a linear orthogonal change of variables, we can assume $J = \diag(0,K^{-1})$, where $K$ is skew-symmetric. In this case there is symmetric $T(x)$ and $V(x)$ such that
\begin{equation}
f^\prime(x) = \begin{pmatrix} 0 & 0 \\ 0 & K^{-1} \end{pmatrix}\begin{pmatrix} T(x) & U(x) \\ U(x)^\top & V(x) \end{pmatrix} = \begin{pmatrix} 0 & 0 \\ K^{-1}U(x)^\top & K^{-1}V(x) \end{pmatrix}.
\end{equation}
This shows that $f$ possesses a linear foliation with a zero system $u\in \mathcal{H}$ and a system $v$ with $\partial_y v(x,y) = K^{-1}V(x)$ so that $y\mapsto v(x,y) \in \mathcal{H}$ with the same $P = K$ for all $x, y$.
\end{proof}
\end{theorem}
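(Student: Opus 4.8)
The plan is to prove both parts by producing, after a suitable linear change of variables, a tower of linear foliations (Definition \ref{linearfoliation}) whose base systems all lie in $\mathcal{H}$; membership in $\mathcal{F}^{(\infty)}$ then follows immediately from the recursive definition. What differs between the two parts is the structural reduction used to reveal the tower: for (i) it is the Jordan normal form of the constant matrix $L$, and for (ii) it is the canonical form of the skew matrix $J$ under an \emph{orthogonal} change of variables, which is precisely what preserves the factorisation $f'=JS$ with $J$ skew and $S$ symmetric. Throughout I use that, by Definition \ref{linearfoliation}, $\mathcal{F}^{(\infty)}$-membership may be tested up to a linear change of variables, so I am free to replace $f$ by any linearly equivalent vector field.

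For part (i), I would first feed the determinant condition into Theorem \ref{dettheorem} to conclude that the nonzero eigenvalues of $L$ occur in positive-negative pairs, counting multiplicities. Passing to the Jordan normal form, I would group the generalised eigenspaces of $\lambda$ and $-\lambda$ together and collect the kernel into a single nilpotent block. A permutation of the coordinates then interleaves each $(\lambda,-\lambda)$ group into a block-triangular matrix whose diagonal blocks are the harmonic oscillators $\diag(\lambda,-\lambda)$, with the Jordan subdiagonal couplings appearing strictly below the diagonal, while the kernel contributes a chain of zero systems. Each diagonal block lies in $\mathcal{H}$ --- the zero systems trivially, and the oscillator because the swap matrix $P=\left(\begin{smallmatrix}0&1\\1&0\end{smallmatrix}\right)$ satisfies $P\diag(\lambda,-\lambda)P^{-1}=-\diag(\lambda,-\lambda)^\top$. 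Reading off the foliation, the innermost driving system and every $\partial_y v$ are such diagonal blocks, so $f$ is a tower of foliations with all data in $\mathcal{H}$, hence $f\in\mathcal{F}^{(\infty)}$.

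For part (ii), the key observation is that an orthogonal change of variables $x\mapsto Qx$ sends $f'(x)=JS(x)$ to $(QJQ^\top)\,(QS(Q^\top x)Q^\top)$, keeping the first factor skew and the second symmetric. I would use this to bring $J$ to its canonical form $\diag(0,K^{-1})$, separating the kernel of $J$ from its invertible skew part $K^{-1}$. Partitioning $S$ conformally, $f'$ becomes block lower triangular with a vanishing top block-row, so the top component of $f$ is constant (a zero system, in $\mathcal{H}$) and the foliation has $\partial_y v=K^{-1}V(x)$. Choosing $P=K$ and using that $K^{-1}$ is skew and $V$ symmetric gives $K(K^{-1}V)K^{-1}=VK^{-1}=-(K^{-1}V)^\top$, so $y\mapsto v(x,y)$ lies in $\mathcal{H}$ with the same $P$ for every $x,y$, whence $f\in\mathcal{F}^{(\infty)}$.

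I expect the main obstacle to be the organisation of part (i): checking that the chosen permutation really does put the paired Jordan blocks into the claimed lower-triangular tower even when the individual block sizes for $\lambda$ and $-\lambda$ differ (only the total multiplicities are forced to agree), and setting up the induction on the base dimension cleanly. A related subtlety is that a complex eigenvalue $\lambda$ forces the Jordan reduction over $\mathbb{C}$; to stay with real vector fields I would instead use the real Jordan form, pairing the conjugate quadruple $\pm\alpha\pm i\beta$ and exhibiting the base block $\diag(R,-R)$, built from the real $2\times2$ rotation-scaling block $R$ of $\alpha+i\beta$, as an element of $\mathcal{H}$ via a real reflection-swap $P$. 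The positive-negative pairing furnished by Theorem \ref{dettheorem} is exactly what makes all of this pairing possible. Part (ii), by contrast, should go through with essentially no friction once the orthogonal normal form of $J$ is in hand, since the single choice $P=K$ discharges the entire $\mathcal{H}$-condition.
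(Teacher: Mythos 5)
Your proposal is correct and follows essentially the same route as the paper: for (i), the Jordan normal form combined with the positive--negative eigenvalue pairing from Theorem \ref{dettheorem} and an interleaving permutation that exhibits $f$ as a tower of foliations over harmonic-oscillator (and zero) base systems in $\mathcal{H}$; for (ii), the orthogonal normal form $J=\diag(0,K^{-1})$ with the single conjugation $P=K$. The only place you go beyond the paper is in flagging --- and correctly repairing via the real Jordan form with base block $\diag(R,-R)$ --- the complex-eigenvalue case, together with the possibility of mismatched Jordan block sizes for $\lambda$ and $-\lambda$, both of which the paper's proof passes over silently.
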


%\pb{I have put this as a separate statement}
\begin{theorem}\label{foliationH}
Consider an $s$-stage SRK method that is volume preserving for the vector field $u : \R^m \to \R^m$, and let $v : \R^{m+n}\to\R^{m+n}$ be such that there exists an invertible matrix $P$ such that for all $x,y$,
\[
	P \partial_y v(x,y) P^{-1}=-\partial_y v(x,y)^\top.
	\]
Then the SRK method is volume preserving for the vector field
\begin{equation}\label{foliation_H}
	f(x,y) = (u(x), v(x,y))^\top.
\end{equation}
\begin{proof}
Let $A\in \R^{s\times s}$ and take $P$ from the assumption.
By Lemma \ref{SSRK} and Remark \ref{SRK}, a SRK method is volume preserving if 
%and only if  \pb{is it also necessary for SRK or just for SSRK?}
\begin{equation}\tag{\ref{sstagecondition}}
\det(I_s\otimes I - h(A\otimes I)F) = \det(I_s\otimes I + h(A^\top \otimes I)F),
\end{equation}
where $F = \diag(f'(k_1),\ldots,f'(k_s))$.
For \eqref{foliation_H}, the Jacobian matrix becomes
\[
f'(x,y)=\begin{pmatrix} u'(x) & 0\\
\star & \partial_y v(x,y)\\
\end{pmatrix}
\]
and using a similarity transformation, we can bring 
$\det(I\otimes I - h(A\otimes I)F)$
to the form
\begin{equation}\label{detblock}
\det\begin{pmatrix}
I-a_{11}u_1' & \cdots & -a_{1s}u_s'& 0 &\cdots &0\\
\vdots & \ddots & \vdots & \vdots & \ddots & \vdots\\
-a_{s1}u_1' & \cdots & I-a_{ss}u_s' & 0 & \cdots & 0\\
\star & \ldots & \star & I-a_{11}v_1' & \cdots & a_{1s}v_s'\\
\vdots & \ddots & \vdots & \vdots & \ddots & \vdots\\
\star & \ldots & \star & I-a_{s1} v_1' & \cdots & a_{ss} v_s'\\
\end{pmatrix},
\end{equation}
where $u_i', v_i'$ are shorthand for $\partial_x u(k_i)$ and $\partial_y v(k_i)$, respectively.
Thus, the condition \eqref{sstagecondition} factorises to
%\begin{multline}\label{conditionfactorised}
\[
\det(I_s\otimes I - h(A\otimes I)U)\det(I_s\otimes I - h(A\otimes I)V)  = \det(I_s\otimes I + h(A^\top \otimes I)U)\det(I_s\otimes I + h(A^\top \otimes I)V),
\]%\end{multline}
with
$U = \diag(u_1',\ldots,u'_s)$, $V = \diag(v_1',\ldots,v'_s)$. We compute
\begin{align*}
\det(I_s\otimes I - h(A\otimes I)V) &= \det(I_s \otimes I - h(I_s\otimes P)(A\otimes I)V(I_s\otimes P)^{-1}) \\
                                                  &= \det(I_s \otimes I - h(A\otimes I)(I_s\otimes P)V(I_s\otimes P^{-1})) \\
                                                  &= \det(I_s \otimes I + h(A \otimes I)V^\top) \\
                                                  &= \det(I_s \otimes I + h V(A^\top \otimes I)) \\
                                                  &= \det(I_s \otimes I + h (A^\top \otimes I)V).
\end{align*}
The last line comes from Sylvester\rq{}s determinant identity. 
The proof is completed noticing that $\det(I_s\otimes I - h(A\otimes I)U) = \det(I_s\otimes I + h(A^\top \otimes I)U)$ is satisfied by the assumption that the method is volume preserving for $u$.
\end{proof}
\end{theorem}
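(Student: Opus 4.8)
The plan is to reduce volume preservation of the method for $f$ to two independent determinant matchings, one governed by the hypothesis on $u$ and the other by the skew-adjointness of $\partial_y v$, by exploiting the block-triangular structure of $f'(x,y)$. First I would appeal to Lemma \ref{SSRK} together with Remark \ref{SRK}, which reduce volume preservation for $f$ to the determinant identity
\begin{equation}\tag{\ref{sstagecondition}}
\det(I_s\otimes I - h(A\otimes I)F) = \det(I_s\otimes I + h(A^\top \otimes I)F),
\end{equation}
where $F = \diag(f'(k_1),\ldots,f'(k_s))$ and the $k_i$ are the stages (for a general SRK method one also needs the companion identity with $A$ replaced by $A-\mathbbm{1}b^\top$, but it is handled identically). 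The structural input is that $f(x,y)=(u(x),v(x,y))^\top$ forces every stage Jacobian $f'(k_i)$ to be block lower triangular, with diagonal blocks $u'$ and $\partial_y v$ evaluated at $k_i$ and a vanishing upper-right block.

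Second, I would apply a permutation similarity to $I_s\otimes I - h(A\otimes I)F$ that regroups the $s(m+n)$ coordinates so that all the $x$-components of the stages precede all the $y$-components. Because the zero upper-right blocks persist under this reordering, the matrix becomes block lower triangular with diagonal blocks $I_s\otimes I - h(A\otimes I)U$ and $I_s\otimes I - h(A\otimes I)V$, where $U = \diag(u'(k_1),\ldots,u'(k_s))$ and $V = \diag(\partial_y v(k_1),\ldots,\partial_y v(k_s))$. Since similarity preserves the determinant and a block-triangular determinant factorises, condition \eqref{sstagecondition} splits as
\[
\det(I_s\otimes I - h(A\otimes I)U)\det(I_s\otimes I - h(A\otimes I)V) = \det(I_s\otimes I + h(A^\top \otimes I)U)\det(I_s\otimes I + h(A^\top \otimes I)V).
\]

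It then suffices to equate the two factors separately. The $U$-factors coincide because the assumption that the method is volume preserving for $u$ is, by Lemma \ref{SSRK} (and Remark \ref{SRK} in the general case), precisely the statement $\det(I_s\otimes I - h(A\otimes I)U) = \det(I_s\otimes I + h(A^\top \otimes I)U)$. For the $V$-factors I would reuse the computation in the proof of Lemma \ref{Hlemma}: conjugating $V$ by $I_s\otimes P$ turns it into $-V^\top$ via the assumed relation $P\partial_y v\,P^{-1}=-\partial_y v^\top$, after which transposing inside the determinant and invoking Sylvester's determinant identity converts $-h(A\otimes I)V^\top$ into $+h(A^\top\otimes I)V$, giving $\det(I_s\otimes I - h(A\otimes I)V) = \det(I_s\otimes I + h(A^\top \otimes I)V)$. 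This $V$-argument is insensitive to whether $A$ or $A-\mathbbm{1}b^\top$ appears, so both conditions of Remark \ref{SRK} follow at once.

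The step I expect to demand the most care is the factorisation itself: one must verify that the block lower-triangular shape of each individual stage Jacobian $f'(k_i)$ genuinely propagates, after the coordinate regrouping, to the claimed block lower-triangular form of the full Kronecker matrix with exactly the diagonal blocks $I_s\otimes I - h(A\otimes I)U$ and $I_s\otimes I - h(A\otimes I)V$. The bookkeeping is purely combinatorial---tracking how the permutation acts simultaneously on the stage index and the $(m+n)$-component index---but it is the natural place for an indexing slip. Once this factorisation is established, the two remaining matchings are immediate from the hypothesis on $u$ and from Lemma \ref{Hlemma}.
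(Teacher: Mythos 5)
Your proposal is correct and follows essentially the same route as the paper's proof: reduce to the determinant condition of Lemma \ref{SSRK} (and Remark \ref{SRK}), factorise it via the block-triangular structure after regrouping stage coordinates, dispatch the $U$-factor by the hypothesis on $u$, and dispatch the $V$-factor by conjugation with $I_s\otimes P$, transposition, and Sylvester's identity exactly as in Lemma \ref{Hlemma}.
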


\begin{corollary}All SRK methods are volume preserving for vector fields in $\mathcal{F}^{(\infty)}$.
\begin{proof}
SRK methods are volume preserving for vector fields in $\mathcal{H}$ by Lemma \ref{Hlemma} and volume preservation for the recursive constructions of $\mathcal{F}^{(\infty)}$ is assured by Theorem~\ref{foliationH}.
\end{proof}
\end{corollary}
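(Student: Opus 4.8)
\section*{Proof proposal}

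The plan is to proceed by induction on the depth of the recursive construction that places $f$ in $\mathcal{F}^{(\infty)}$, using Lemma~\ref{Hlemma} as the base case and Theorem~\ref{foliationH} as the inductive step. To make the induction well-founded, I would assign to each $f \in \mathcal{F}^{(\infty)}$ a depth $d(f)$ equal to the least number of foliation operations needed to build $f$ from $\mathcal{H}$: elements of $\mathcal{H}$ receive depth $0$, and an element of depth $d \geq 1$ is, possibly after a linear change of variables, of the form $f(x,y) = (u(x),v(x,y))^\top$ with $u \in \mathcal{H}\cup\mathcal{F}^{(\infty)}$ of depth strictly less than $d$ and with $v$ satisfying $P\partial_y v(x,y) P^{-1} = -\partial_y v(x,y)^\top$ for some invertible $P$.

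For the base case $d(f)=0$ we have $f \in \mathcal{H}$, and Lemma~\ref{Hlemma} states directly that every SRK method is volume preserving for such $f$. For the inductive step, suppose every SRK method is volume preserving for all elements of $\mathcal{F}^{(\infty)}$ of depth less than $d$, and let $f$ have depth $d \geq 1$, written (modulo a change of variables addressed below) as $f = (u,v)^\top$. Then $u$ lies in $\mathcal{H}$ or in $\mathcal{F}^{(\infty)}$ with smaller depth, so by Lemma~\ref{Hlemma} or the induction hypothesis the given $s$-stage SRK method is volume preserving for $u$. The hypothesis on $v$ is exactly the condition required by Theorem~\ref{foliationH}, so that theorem applies and yields that the same SRK method is volume preserving for $f$.

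The one point that needs care is the linear change of variables built into the notion of a linear foliation (Definition~\ref{linearfoliation}): the foliated form $(u,v)^\top$ may only be attained after replacing $f$ by $\tilde f(x) = P_0 f(P_0^{-1}x)$ for some invertible $P_0$. Here Lemma~\ref{linearchangeofvariables} closes the gap, since it gives $\tilde\phi_h'(x) = P_0\phi_h'(P_0^{-1}x)P_0^{-1}$ and hence $\det(\tilde\phi_h'(x)) = \det(\phi_h'(P_0^{-1}x))$; thus volume preservation for the foliated vector field is equivalent to volume preservation for $f$ itself. I expect this bookkeeping --- verifying that the determinant-one property is invariant under the similarity transformation and that Theorem~\ref{foliationH} is available for every number of stages $s$ --- to be the only real obstacle, and it is routine once the depth induction is set up. The conclusion then follows for all $f \in \mathcal{F}^{(\infty)}$.
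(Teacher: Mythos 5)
Your proposal is correct and follows essentially the same route as the paper: Lemma~\ref{Hlemma} as the base case and Theorem~\ref{foliationH} for the recursive step, with your explicit depth induction and the remark about Lemma~\ref{linearchangeofvariables} simply making precise what the paper's one-line proof leaves implicit.
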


\begin{theorem}\label{foliationS}
Consider a SRK method with at most two stages (or a composition of such methods) that is volume preserving for the vector field $u : \R^m \to \R^m$, and let $v : \R^{m+n}\to\R^{m+n}$ be such that there exists an invertible matrix $P$ such that for all $x,y$,
\[
	P \partial_y v(x,y) P^{-1}=-\partial_y v(x,y).
	\]
Then the SRK method is volume preserving for the vector field
\begin{equation*}%\label{foliation_H}
	f(x,y) = (u(x), v(x,y))^\top.
\end{equation*}
\begin{proof}
Let $A\in\R^{2\times 2}$ and take $P$ from assumption. As in Theorem~\ref{foliationH}, the Jacobian matrix is block triangular 
\[
f'(x,y)=\begin{pmatrix} u'(x) & 0\\
\partial_x v(x,y) & \partial_y v(x,y)\\
\end{pmatrix}.
\]
For 2-stage methods, the condition for volume preservation from equation \eqref{2stagecond} is
\begin{multline*}
\det(I - ha_{11}f^\prime(k_1) - ha_{22}f^\prime(k_2) + h^2\det(A)f^\prime(k_1)f^\prime(k_2)) \\
= \det(I + ha_{11}f^\prime(k_1) + ha_{22}f^\prime(k_2) + h^2 \det(A) f^\prime(k_1) f^\prime(k_2)).
\end{multline*}
Now, because of the block-triangular structure of $f'(k_i)$ and
\[
f'(k_1)f'(k_2) = \begin{pmatrix} u^\prime(k_1)u^\prime(k_2) & 0 \\ \star & \partial_y v(k_1)\partial_y v(k_2) \\\end{pmatrix}, \quad f'(k_1)+f'(k_2) = \begin{pmatrix} u^\prime(k_1)+u^\prime(k_2) & 0 \\ \star & \partial_y v(k_1)+\partial_y v(k_2) \\\end{pmatrix},
\]
where we have used the convention that $u(k_i)$ has used the $x$ component of $k_i$. The condition \eqref{2stagecond} then factorises into
\begin{multline*}
\det(I - h(a_{11}f^\prime(k_1) +a_{22}f^\prime(k_2)) + h^2\det(A)f^\prime(k_1)f^\prime(k_2)) \\ =
\det(I - h(a_{11}u^\prime(k_1) +a_{22}u^\prime(k_2)) + h^2\det(A)u^\prime(k_1)u^\prime(k_2))\\ \cdot
\det(I - h(a_{11}\partial_y v(k_1) +a_{22}\partial_y v(k_2)) + h^2\det(A)\partial_y v(k_1)\partial_y v(k_2)).
\end{multline*}
A similarity transformation with $P$ leads to
 %\begin{eqnarray*}
\begin{multline}\label{2stagecondforv}
\det(I - h(a_{11}\partial_y v(k_1) +a_{22}\partial_y v(k_2)) + h^2\det(A)\partial_y v(k_1)\partial_y v(k_2)) \\
\begin{aligned}
&=\det(I - h(a_{11}P\partial_y v(k_1)P^{-1} + a_{22}P\partial_y v(k_2)P^{-1}) + h^2\det(A)P\partial_yv(k_1)P^{-1}P\partial_yv(k_2)P^{-1}) \\
&= \det(I + h(a_{11}\partial_y v(k_1) + a_{22}\partial_y v(k_2)) + h^2 \det(A) \partial_y v(k_1) \partial_y v(k_2)).
\end{aligned}
\end{multline}
Condition \eqref{2stagecond} for $f$ is now satisfied by considering \eqref{2stagecond} for the vector field $u$ (which holds because we assume the SRK method is volume preserving) and \eqref{2stagecondforv}.
This proves the result for 2-stage SRK methods. To complete the proof, note that a 1-stage SRK method is equivalent to a 2-stage SRK method with two equal stages, and compositions of volume preserving maps are also volume preserving.
\end{proof}
\end{theorem}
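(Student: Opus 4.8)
The plan is to follow the blueprint of Theorem~\ref{foliationH}, but to use the two-stage volume-preservation criterion \eqref{2stagecond} in place of the $s$-stage criterion of Lemma~\ref{SSRK}, since \eqref{2stagecond} is the form of the condition that factorises cleanly when $\partial_y v$ is conjugate to $-\partial_y v$ rather than to $-\partial_y v^\top$. Accordingly I would fix $A\in\R^{2\times2}$, take $P$ from the hypothesis, and record that the foliated field has lower block-triangular Jacobian
\[
f'(x,y)=\begin{pmatrix} u'(x) & 0 \\ \partial_x v(x,y) & \partial_y v(x,y) \end{pmatrix}.
\]

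First I would exploit this block structure at the level of the matrices appearing in \eqref{2stagecond}. Since $f'(k_1)$ and $f'(k_2)$ are both lower block-triangular with the same block sizes, so are their sum $f'(k_1)+f'(k_2)$ and product $f'(k_1)f'(k_2)$, whose diagonal blocks are exactly $u'(k_1)+u'(k_2)$, $u'(k_1)u'(k_2)$ and $\partial_y v(k_1)+\partial_y v(k_2)$, $\partial_y v(k_1)\partial_y v(k_2)$. Because the determinant of a block-triangular matrix is the product of the determinants of its diagonal blocks, each single determinant in \eqref{2stagecond} splits into a $u'$-factor times a $\partial_y v$-factor, with the coupling block $\partial_x v$ contributing nothing.

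Next I would dispose of each factor separately. The $u'$-factor is precisely condition \eqref{2stagecond} written for the field $u$, which holds by the assumption that the method is volume preserving for $u$. For the $\partial_y v$-factor I would conjugate the matrix inside the determinant by $P$ (a similarity, hence determinant-preserving) and apply $P\,\partial_y v(k_i)\,P^{-1}=-\partial_y v(k_i)$. The linear-in-$h$ terms $a_{11}\partial_y v(k_1)+a_{22}\partial_y v(k_2)$ then change sign, while the quadratic term $h^2\det(A)\,\partial_y v(k_1)\partial_y v(k_2)$ is left invariant because the two sign flips cancel. This turns the left-hand $\partial_y v$-factor into the right-hand one, and multiplying the two transformed factors reassembles the right-hand side of \eqref{2stagecond} for $f$.

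The step I expect to demand the most care is exactly that sign bookkeeping in the quadratic term: it is the cancellation of the two minus signs in $(-\partial_y v(k_1))(-\partial_y v(k_2))$ that makes the same factorisation serve the $\mathcal{S}$-type relation $P\partial_y v P^{-1}=-\partial_y v$ just as it served the $\mathcal{H}$-type relation in Theorem~\ref{foliationH}. This is also why the result is confined to at most two stages: the clean criterion \eqref{2stagecond} arises from the block-determinant identity \eqref{blockdetidentity} via a commutativity available only for $s=2$, so no analogous factorisation is at hand for $s\geq3$. To finish, I would treat a one-stage method as a two-stage method with coincident stages and invoke closure of volume-preserving maps under composition.
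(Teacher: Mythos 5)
Your proposal is correct and follows essentially the same route as the paper's proof: block-triangular factorisation of the two-stage condition \eqref{2stagecond}, disposal of the $u'$-factor by the volume-preservation hypothesis, and a similarity transformation by $P$ on the $\partial_y v$-factor in which the sign flips cancel in the quadratic term, finishing with the one-stage and composition remarks. No gaps.
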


\begin{corollary}\label{cor31}All SRK methods with at most two stages (and compositions thereof) are volume preserving for vector fields in $\mathcal{F}^{(2)}$.
\begin{proof}
All SRK methods with at most two stages (and compositions thereof) are volume preserving for vector fields in $\mathcal{H}$ by Lemma \ref{Hlemma} and $\mathcal{S}$ by Lemma \ref{Slemma}. Volume preservation for the recursive construction of $\mathcal{F}^{(2)}$ is assured by Theorems~\ref{foliationH} and ~\ref{foliationS}.
\end{proof}
\end{corollary}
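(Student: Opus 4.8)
The plan is to argue by induction on the recursion depth used to build a given $f \in \mathcal{F}^{(2)}$. Before starting, I would record the reduction that makes the induction clean: volume preservation is invariant under linear changes of variables. Indeed, by Lemma \ref{linearchangeofvariables} the transformed map satisfies $\tilde\phi_h'(x) = P\phi_h'(P^{-1}x)P^{-1}$, which has the same determinant as $\phi_h'(P^{-1}x)$, so $\tilde\phi_h$ is volume preserving exactly when $\phi_h$ is. Consequently I may assume throughout that each vector field is presented in the standard foliation form $f(x,y) = (u(x), v(x,y))^\top$ rather than merely being linearly foliate.

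For the base of the induction, recall (as observed in the proof of Lemma \ref{lemmaSets}) that $\mathcal{H}$ and $\mathcal{S}$ sit at the bottom of the recursion, arising from the trivial foliation in which $n+m = m$. Lemma \ref{Hlemma} gives that all SRK methods, in particular those with at most two stages, are volume preserving for vector fields in $\mathcal{H}$, and Lemma \ref{Slemma} gives the same for at-most-two-stage methods (and compositions thereof) on $\mathcal{S}$. This settles the base case.

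For the inductive step, take $f(x,y) = (u(x), v(x,y))^\top$ with $u \in \mathcal{S} \cup \mathcal{H} \cup \mathcal{F}^{(2)}$ constructed with strictly smaller recursion depth. By the induction hypothesis together with the base case, any at-most-two-stage SRK method is volume preserving for $u$. I would then split on the defining condition for the fibre: if $P\partial_y v(x,y)P^{-1} = -\partial_y v(x,y)^\top$ for all $x,y$, then Theorem \ref{foliationH} applies and the method is volume preserving for $f$; if instead $P\partial_y v(x,y)P^{-1} = -\partial_y v(x,y)$ for all $x,y$, then Theorem \ref{foliationS} applies, and this is precisely where the restriction to at-most-two-stage methods is required, consistent with the statement. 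Either way $f$ is volume preserving, closing the induction. Finally, compositions are handled trivially: the Jacobian determinant of a composition is the product of the individual Jacobian determinants, each of which is unity.

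The step I expect to be the crux is not any computation — all of the analytic work is already carried out in Theorems \ref{foliationH} and \ref{foliationS} and in Lemmas \ref{Hlemma} and \ref{Slemma} — but rather the bookkeeping that makes the recursion a genuine well-founded induction and that matches the hypothesis ``the method is volume preserving for $u$'' demanded by those two theorems to exactly what the induction hypothesis supplies. Getting that correspondence right, and tracking the at-most-two-stage restriction through the $\mathcal{S}$-type branch, is the only real subtlety.
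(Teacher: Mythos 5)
Your proposal is correct and follows essentially the same route as the paper: base case from Lemmas \ref{Hlemma} and \ref{Slemma}, inductive step through the recursion via Theorems \ref{foliationH} and \ref{foliationS}, with compositions handled by multiplicativity of the Jacobian determinant. You merely make explicit the well-founded induction and the reduction to standard foliation form via Lemma \ref{linearchangeofvariables}, which the paper leaves implicit.
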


We already saw in the introduction that the implicit midpoint rule (which is the only 1-stage SRK method) is volume preserving for all $f \in \mathcal{D}$, and that \emph{all} such vector fields must lie in $\mathcal{D}$. However, does the set $\mathcal{F}^{(2)}$ contain \emph{all} vector fields such that all 2-stage SRK methods are volume preserving? And does the set $\mathcal{F}^{(\infty)}$ contain \emph{all} vector fields such that all SRK methods are volume preserving? We do not yet know the answers to these questions, but the following counterexamples are relevant.

The first counterexample shows that Corollary~\ref{cor31} is not true for three-stage methods. In the second example, we show that $\mathcal{D}\setminus \mathcal{F}^{(2)}\neq\emptyset$ and that only the midpoint rule can be volume preserving for all methods in $\mathcal{D}$. This counterexample is of the lowest possible dimension (3) but one might argue that volume preservation is hindered in this example because the vector field is not completely smooth at $x=0$.
The third example clarifies the matter: we give a way to construct a class of (smooth) vector fields in $\mathcal{D}$ for which two-stage methods cannot be expected to preserve volume.
\begin{example}
Hairer, Lubich and Wanner \cite[VI.9]{hairer2006geometric} used the vector field
\begin{equation*}
\dot{x} = \sin z, \quad \dot{y} = \cos z, \quad \dot{z} = \sin y + \cos x,
\end{equation*}
to show that the three-stage Gauss-Legendre method is not volume preserving, despite the vector field lying in $\mathcal{S}$. What could be interesting is to find some class of functions $\mathcal{F}^{(3)}$ such that all three-stage SRK methods are volume preserving, but not all four-stage SRK methods.
\end{example}

\begin{example}
Consider the continuously differentiable vector field
\begin{equation*}
f(x,y,z) = \left\{ \begin{array}{cc} (\frac13 x^3 - c, -x^2 y, 0 ) & \text{ if $x \geq 0$}\\ (\frac13 x^3 - c, 0, -x^2z) & \text{ if $x < 0$} \end{array} \right.,
\end{equation*}
\begin{equation*}
f'(x,y,z) = \begin{pmatrix} x^2 & 0 & 0 \\ -2xy & -x^2 & 0 \\ 0 & 0 & 0 \end{pmatrix} \text{ if $x\geq 0$}, \begin{pmatrix} x^2 & 0 & 0 \\ 0 & 0 & 0 \\ -2xz & 0 & -x^2 \end{pmatrix} \text{ if $x< 0$}.
\end{equation*}
Then $f\in\mathcal{D}$, but not all 2-stage SRK methods are volume preserving. The principle here is that if $k_1$ and $k_2$ have $x$-components with different signs, then $f^\prime(k_1)$ and $f^\prime(k_2)$ will violate the condition in \eqref{2stagecond}. 
For instance, the two-stage Gauss-Legendre method with initial value $(1/2,0,0)$, drift $c=1$ and step size $h=1/2$ has stage values with different signs in the $x$-coordinate and hence, does not preserve volume.
%\hl{Plot an empirical computation showing what happens as $x$ changes sign}.
\end{example}
\begin{example}
The following example illustrates that SRK methods cannot preserve simple systems in $\mathcal{D}$ that do not belong to the classes $\mathcal{F}^{(\infty)}$ or $\mathcal{F}^{(2)}$. Let $g\in\mathcal{D}$ and let $A(x)$ be skew-symmetric (and invertible) and $S(y)$ be symmetric matrices. Then, any vector field with Jacobian matrix 
\[
	f'(x,y)=\begin{pmatrix} g'(x) & 0 \\ \star & A(x)S(y)\end{pmatrix}
\]
will satisfy the determinant condition, however, the similarity transform $P$ to yield $P\partial_y f(x,y)P^{-1}=-\partial_y f(x,y)^\top$ is now $P=A(x)^{-1}$ and this dependence on $x$ hinders a crucial step in the above proof.
For volume preservation of SRK methods, it is thus essential to have a constant transform $P$ for all values of $x,y$ or at least in a region of interest for the numerical integration.
We give the following concrete example,
\[
A(x)=\begin{pmatrix}
0 & x_1 & x_1 \\
-x_1 & 0 & x_1x_2 \\
-x_1 & -x_1x_2 & 0\\
\end{pmatrix},\quad
S(y)=\begin{pmatrix}
y_1^2 & 0 & 0 \\
0& y_2^2 & 0\\
0 & 0 & y_3\\
\end{pmatrix},
\]
which is combined with the harmonic oscillator $g(x_1,x_2)=(x_2,-x_1)^\top$ and could originate from
$f(x,y)=\left(g(x), A(x) (\frac13 y_1^3,\frac13 y_2^3, \frac12 y_1^2)^\top\right)^\top$.
Integrating with step size $h=1/2$ from $(x_0,y_0)=(1,1/2,1/3,1/4,1/5)$ leads to a change of volume for the two-stage Gauss-Legendre method. The implicit midpoint rule preserves volume as expected.
\end{example}
\section{Measure-preservation of Runge-Kutta methods}
\label{sec.measure}
In the introduction, we have pointed out that the trapezoidal method is not necessarily volume preserving but instead preserves the measure $\det\left(I-\tfrac{h}{2}f^\prime(x)\right)dx$ for quadratic Hamiltonian vector fields \cite{celledoni2013geometric}.
Recall that a map $\phi$ preserves a measure $\mu(x)dx$ if
\[
	\det(\phi'(x)) = \frac{\mu(x)}{\mu(\phi(x))}
\]
This result is generalised in the following lemma.
\begin{lemma}
The trapezoidal rule \eqref{trapezoidal} preserves the measure $\mu(x)dx$ with 
\begin{equation*}
\mu(x) = \det\left(I\pm\tfrac{h}{2}f^\prime(x)\right)
\end{equation*}
for vector fields $f$ that satisfy the determinant condition \eqref{det}.
\end{lemma}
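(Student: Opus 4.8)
The plan is to verify the measure-preservation identity $\det(\phi_h'(x)) = \mu(x)/\mu(\phi_h(x))$ recalled just above the statement, reducing everything to the determinant condition \eqref{det}. First I would obtain the Jacobian of the trapezoidal rule by differentiating its defining relation $\phi_h(x) - x = \tfrac{h}{2}\bigl(f(x) + f(\phi_h(x))\bigr)$ implicitly with respect to $x$. This gives $\bigl(I - \tfrac{h}{2}f'(\phi_h(x))\bigr)\phi_h'(x) = I + \tfrac{h}{2}f'(x)$, and hence, for $h$ small enough that the left factor is invertible,
\[
\phi_h'(x) = \Bigl(I - \tfrac{h}{2}f'(\phi_h(x))\Bigr)^{-1}\Bigl(I + \tfrac{h}{2}f'(x)\Bigr).
\]
Taking determinants yields $\det(\phi_h'(x)) = \det\bigl(I + \tfrac{h}{2}f'(x)\bigr)\big/\det\bigl(I - \tfrac{h}{2}f'(\phi_h(x))\bigr)$.

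Next I would treat the two sign choices in $\mu$ separately. For the minus sign, $\mu(x) = \det\bigl(I - \tfrac{h}{2}f'(x)\bigr)$, so $\mu(x)/\mu(\phi_h(x))$ equals $\det\bigl(I - \tfrac{h}{2}f'(x)\bigr)\big/\det\bigl(I - \tfrac{h}{2}f'(\phi_h(x))\bigr)$; comparing with the formula for $\det(\phi_h'(x))$, the two agree precisely when $\det\bigl(I + \tfrac{h}{2}f'(x)\bigr) = \det\bigl(I - \tfrac{h}{2}f'(x)\bigr)$, which is the determinant condition \eqref{det} at the point $x$. For the plus sign, $\mu(x) = \det\bigl(I + \tfrac{h}{2}f'(x)\bigr)$, and the identity reduces instead to \eqref{det} evaluated at $\phi_h(x)$; since \eqref{det} is assumed to hold at every point, this is equally available.

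Because both reductions are single-line determinant manipulations, there is no serious obstacle. The only points requiring care are invertibility of $I - \tfrac{h}{2}f'(\phi_h(x))$ (and non-vanishing of $\mu$), which hold for sufficiently small $h$ so that the method and the measure are well defined, and the bookkeeping of which argument --- $x$ or $\phi_h(x)$ --- the determinant condition must be invoked at in each of the two cases.
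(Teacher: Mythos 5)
Your proposal is correct and follows essentially the same route as the paper: differentiate the implicit defining relation to get $\det(\phi_h'(x)) = \det(I+\tfrac{h}{2}f'(x))/\det(I-\tfrac{h}{2}f'(\phi_h(x)))$ and then invoke the determinant condition \eqref{det} to identify this with $\mu(x)/\mu(\phi_h(x))$. Your explicit bookkeeping of where \eqref{det} is applied (at $x$ for the minus sign, at $\phi_h(x)$ for the plus sign) is a useful clarification of a step the paper leaves implicit, but it is not a different argument.
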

\begin{proof}
We compute the Jacobian matrix $\phi'_h$ of the trapezoidal rule,
\[
\phi_h^\prime(x) = I + \frac{h}{2}f^\prime(x) + \frac{h}{2}f^\prime(\phi_h(x))\phi_h^\prime(x),
\]
and see that
\[
\det(\phi_h^\prime(x)) = \frac{\det(I+\frac{h}{2}f^\prime(x))}{\det(I-\frac{h}{2}f^\prime(\phi_h(x)))} = \frac{\mu(x)}{\mu(\phi_h(x))}.
\]
\end{proof}
This means that volume is conserved to order $\mathcal{O}(h^2)$ globally (by Theorem \ref{dettheorem}), but more importantly that the dynamics of the numerical solution lie in the class of measure preserving systems, giving a qualitative advantage over methods lacking this property \cite{mclachlan2001kinds}. The trapezoidal method is conjugate to the implicit midpoint rule \cite[VI.8]{hairer2006geometric}, which goes some way towards explaining this behaviour. However, the next method we consider has similar measure preserving properties, but doesn\rq{}t appear likewise to be \lq\lq{}conjugate to volume preserving\rq{}\rq{}.

There has been recent interest in the properties of the Kahan method \cite{celledoni2014integrability,celledoni2013geometric}. For a quadratic vector field $f(x) = Q(x) + L(x) + d$ where $Q$ is quadratically homogeneous, $L$ is linear and $d$ is constant, the symmetric bilinear form $q(x,y)$ is formed by polarisation,
\begin{equation}
q(x,y) = \frac12 \Bigl(Q(x+y) - Q(x) - Q(y)\Bigr),
\end{equation}
and Kahan\rq{}s unconventional numerical method is then given by
\begin{equation}\label{eqkahan}
\frac{\phi_h(x)-x}{h} = q(x,\phi_h(x)) + \frac12L\,\Bigl(x+\phi_h(x)\Bigr) + d.
\end{equation}
In \cite{celledoni2013geometric}, it was shown that Kahan\rq{}s method is equivalent to a three-stage Runge-Kutta method restricted to quadratic vector fields. We give the following generalisation.
\begin{lemma}\label{lemmaKahanRK}
Restricted to quadratic vector fields, Kahan\rq{}s method is equivalent to the $s$-stage Runge-Kutta method
\begin{equation}\label{kahanRK}
\phi_h(x) = x + h\sum_{i=1}^s b_i f(x + c_i(\phi_h(x)-x)),
\end{equation}
for any $b$ and $c$ satisfying $\sum_{i=1}^s b_i = 1$, $\sum_{i=1}^s b_i c_i = \frac12$, $\sum_{i=1}^s b_ic_i^2 =0$. This implies that the Butcher tableau satisfies $A = cb^\top$, but the converse is not true.
\end{lemma}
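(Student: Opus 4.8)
The plan is to substitute the quadratic structure of $f$ directly into the right-hand side of \eqref{kahanRK} and show that it collapses to the right-hand side of Kahan's method \eqref{eqkahan}. Write $f = Q + L + d$ with $Q(a) = q(a,a)$ quadratically homogeneous, and abbreviate $\Delta = \phi_h(x) - x$, so that the evaluation points in \eqref{kahanRK} are $x + c_i\Delta$. The key step is to expand each term using the bilinearity and symmetry of $q$ together with $Q(\lambda a) = \lambda^2 Q(a)$, giving
\[
f(x + c_i\Delta) = Q(x) + L(x) + d + c_i\bigl(2\,q(x,\Delta) + L(\Delta)\bigr) + c_i^2\, Q(\Delta).
\]

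First I would form the weighted sum $\sum_i b_i f(x + c_i\Delta)$ and regroup by powers of $c_i$, so that the coefficients are exactly the three moments $\sum_i b_i$, $\sum_i b_i c_i$ and $\sum_i b_i c_i^2$. The hypotheses $\sum_i b_i = 1$, $\sum_i b_i c_i = \tfrac12$, $\sum_i b_i c_i^2 = 0$ then annihilate the $Q(\Delta)$ term, fix the cross term and halve the linear increment, leaving
\[
\sum_i b_i f(x + c_i\Delta) = Q(x) + L(x) + d + q(x,\Delta) + \tfrac12 L(\Delta).
\]
Substituting $\Delta = \phi_h(x) - x$ and using $q(x,x) = Q(x)$ together with the linearity of $L$, the right-hand side equals $q(x,\phi_h(x)) + \tfrac12 L(x + \phi_h(x)) + d$; multiplying by $h$ and adding $x$ reproduces the defining equation \eqref{eqkahan} verbatim, so both methods determine $\phi_h(x)$ through the same implicit equation and are therefore equivalent.

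For the Butcher-tableau assertion I would recast \eqref{kahanRK} in standard form. Since $\phi_h(x) - x = h\sum_j b_j f(k_j) = \Delta$ with stages $k_i = x + c_i\Delta$, we obtain $k_i = x + h\sum_j (c_i b_j) f(k_j)$, whence $a_{ij} = c_i b_j$, i.e.\ $A = cb^\top$; with $\sum_j b_j = 1$ the row sums $\sum_j a_{ij} = c_i$ agree with the $c_i$. The converse fails because $A = cb^\top$ records only the rank-one structure of the tableau and (through consistency) the first moment: a consistent rank-one tableau with $\sum_i b_i c_i = \tfrac12$ but $\sum_i b_i c_i^2 \neq 0$ still satisfies $A = cb^\top$, yet the surviving $Q(\Delta)$ term changes the implicit equation and so it does not reproduce Kahan's method on quadratic fields.

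I expect no genuine obstacle; the one point requiring care is the bilinear expansion of $Q(x + c_i\Delta)$, where correctly using $Q(a) = q(a,a)$ and homogeneity produces the factors $2c_i$ and $c_i^2$ that make the three moment conditions the natural and sufficient hypotheses for the collapse.
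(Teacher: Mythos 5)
Your proof is correct and follows essentially the same route as the paper's: expand the quadratic vector field via the symmetric bilinear form at the stage points $x + c_i(\phi_h(x)-x)$ and match the weighted sum against Kahan's implicit equation \eqref{eqkahan}, with the three moment conditions playing exactly the role you describe. Your write-up is more detailed than the paper's two-line computation and, unlike the paper, explicitly verifies the $A = cb^\top$ claim and why its converse fails, but the underlying argument is identical.
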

\begin{proof}
Let $x'=\phi_h(x)$ and write the vector field as $f(x) = q(x,x) + Lx + d$ with the symmetric bilinear form $q$, then, expanding out and setting equal to Kahan's method \eqref{eqkahan}
\begin{align*}
	 \frac{x'-x}{h} &= \sum_{i=1} b_i q\Bigl(x + c_i(x'-x),\ x + c_i(x'-x)\Bigr) + L (x + c_i(x'-x)) + d\\
	%&= \sum_{i=1} b_i q(x + c_i(x'-x),x + c_i(x'-x)) + L ( (1-c_i)x + c_ix') + d
	%
	&=q(x',x) + \tfrac12L(x+x')+d
	\end{align*} 
yields the above conditions.
\end{proof}
In \cite[Prop. 5]{celledoni2013geometric}, it was shown that for quadratic Hamiltonian vector fields, Kahan\rq{}s method preserves the measure with density $
\mu(x) = \det(I-\frac{h}{2}f^\prime(x))^{-1}.$
The proof is easily extended to all quadratic vector fields satisfying  the determinant condition \eqref{det}.
\begin{lemma}
Kahan's method preserves the measure $\mu(x)dx$ with 
\begin{equation}\label{kahanmeasure}
\mu(x) = \det\left(I\pm\tfrac{h}{2}f^\prime(x)\right)^{-1}
\end{equation}
for \emph{quadratic} vector fields $f$ that satisfy the determinant condition \eqref{det}.
\end{lemma}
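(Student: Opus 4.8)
The plan is to exploit the Runge--Kutta representation of Kahan's method from Lemma~\ref{lemmaKahanRK} together with the Jacobian determinant formula of Lemma~\ref{basicdeterminantlemma}. Writing $x' = \phi_h(x)$, recall from Lemma~\ref{lemmaKahanRK} that for a quadratic field Kahan's method coincides with the $s$-stage method whose stages are $k_i = x + c_i(x'-x) = (1-c_i)x + c_i x'$ and whose tableau satisfies $A = cb^\top$, with $\sum_i b_i = 1$, $\sum_i b_i c_i = \tfrac12$ and $\sum_i b_i c_i^2 = 0$. The crucial structural observation is that, since $f$ is quadratic, its Jacobian $f'$ is affine, so the convex-combination form of $k_i$ gives $f'(k_i) = (1-c_i)f'(x) + c_i f'(x')$. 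I will abbreviate $P = f'(x)$ and $P' = f'(x')$.

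First I would insert these into the determinant formula \eqref{rungeJacobiandet}. Because $A = cb^\top$ is rank one, both $(A\otimes I)F$ and $((A-\mathbbm{1}b^\top)\otimes I)F = ((c-\mathbbm{1})b^\top\otimes I)F$ factor as products $(c\otimes I)(b^\top\otimes I)F$ and $((c-\mathbbm{1})\otimes I)(b^\top\otimes I)F$ of an $sn\times n$ and an $n\times sn$ matrix. Applying Sylvester's determinant identity collapses each $sn\times sn$ determinant to an $n\times n$ one: the denominator becomes $\det(I - h\sum_j b_j c_j f'(k_j))$ and the numerator $\det(I - h\sum_j b_j(c_j - 1)f'(k_j))$.

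Then I would evaluate the two inner sums using the order conditions. Substituting $f'(k_j) = (1-c_j)P + c_j P'$ and using $\sum_j b_j c_j = \tfrac12$ and $\sum_j b_j c_j^2 = 0$ gives $\sum_j b_j c_j f'(k_j) = \tfrac12 P$, so the denominator is $\det(I - \tfrac{h}{2}f'(x))$. Similarly $\sum_j b_j f'(k_j) = \tfrac12(P + P')$ (using $\sum_j b_j = 1$ and $\sum_j b_j c_j = \tfrac12$), whence $\sum_j b_j(c_j-1)f'(k_j) = \tfrac12 P - \tfrac12(P+P') = -\tfrac12 P'$ and the numerator is $\det(I + \tfrac{h}{2}f'(x'))$. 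Thus $\det(\phi_h'(x)) = \det(I + \tfrac{h}{2}f'(\phi_h(x)))/\det(I - \tfrac{h}{2}f'(x))$.

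Finally, the determinant condition \eqref{det} lets me replace $I + \tfrac{h}{2}f'(\phi_h(x))$ by $I - \tfrac{h}{2}f'(\phi_h(x))$ (or, equally, rewrite the denominator with a $+$ sign), yielding $\det(\phi_h'(x)) = \mu(x)/\mu(\phi_h(x))$ for $\mu(x) = \det(I \pm \tfrac{h}{2}f'(x))^{-1}$, which is exactly measure preservation. I expect the main obstacle to be the bookkeeping in the Sylvester reduction, that is, verifying that the rank-one tableau $A=cb^\top$ genuinely allows the large Kronecker-product determinants to collapse to $n\times n$ ones; once that is in place, everything reduces to a direct application of the three order conditions and the determinant hypothesis.
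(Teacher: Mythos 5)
Your proof is correct, but it takes a genuinely different route from the paper. The paper works directly with Kahan's method in the form \eqref{kahan}: implicit differentiation gives an expression of the type $\bigl(I+\tfrac{h}{2}f'(\phi_h(x))-hf'(\tfrac{x+\phi_h(x)}{2})\bigr)\phi_h'(x)=I-\tfrac{h}{2}f'(x)+hf'(\tfrac{x+\phi_h(x)}{2})$, and the affineness of $f'$ (so that $f'(\tfrac{x+x'}{2})=\tfrac12 f'(x)+\tfrac12 f'(x')$) collapses this in one line to $\det(\phi_h'(x))=\det(I+\tfrac{h}{2}f'(\phi_h(x)))/\det(I-\tfrac{h}{2}f'(x))$, after which the determinant condition \eqref{det} finishes the argument exactly as you do. You instead go through the Runge--Kutta representation \eqref{kahanRK} of Lemma~\ref{lemmaKahanRK}, the general determinant formula \eqref{rungeJacobiandet} of Lemma~\ref{basicdeterminantlemma}, the mixed-product factorisation $(cb^\top)\otimes I=(c\otimes I)(b^\top\otimes I)$ together with Sylvester's identity to collapse the $sn\times sn$ determinants to $n\times n$ ones, and then the three order conditions; all of these steps check out (in particular $\sum_j b_jc_jf'(k_j)=\tfrac12 f'(x)$ and $\sum_j b_j(c_j-1)f'(k_j)=-\tfrac12 f'(\phi_h(x))$ are correct). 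What your approach buys is generality and systematicity: it is essentially the computation the paper itself deploys later in the proof of Theorem~\ref{kahanfoliation} for the generalized Kahan methods, so your argument covers the whole family \eqref{kahanRK} at once rather than the specific three-term form \eqref{kahan}. What the paper's approach buys is brevity: two lines, no Kronecker-product bookkeeping and no appeal to Sylvester. The only caveat in your write-up is implicit nondegeneracy --- formula \eqref{rungeJacobiandet} presupposes that $I_s\otimes I-h(A\otimes I)F$ is invertible (equivalently, after your collapse, that $\det(I-\tfrac{h}{2}f'(x))\neq 0$), the same standing assumption the paper makes, so this is not a gap.
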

\begin{proof}
We compute the Jacobian matrix $\phi'_h$ of Kahan's method in the form \eqref{kahan},
\[
\phi_h^\prime(x) = \frac{I - \frac{h}{2}f^\prime(x) + \frac{h}{2}f^\prime\left(\frac{x+\phi_h(x)}{2}\right)}{I + \frac{h}{2}f^\prime(\phi_h(x)) - \frac{h}{2}f^\prime\left(\frac{x+\phi_h(x)}{2}\right)}.
\]
Since $f$ is quadratic, $f'$ is affine and thus
\[
\det(\phi_h^\prime(x)) = \frac{\det(I+\frac{h}{2}f^\prime(\phi_h(x)))}{\det(I-\frac{h}{2}f^\prime(x))} = \frac{\mu(x)}{\mu(\phi_h(x))}.
\]
\end{proof}

 Due to the similarity of this measure to that preserved by the trapezoidal method, one might at first glance suggest that the Kahan method is conjugate to some volume preserving method too, but this does not appear to be the case. At least, Kahan\rq{}s method is not conjugate by B-series to any symplectic method \cite{celledoni2013geometric}. It may be interesting to investigate how these measure preserving properties of the trapezoidal rule and Kahan\rq{}s method can be generalised.

From Lemmata \ref{linearchangeofvariables}, \ref{foliationlemma} and \ref{foliationsum} on linear foliations follow similar measure preservation properties generalising the volume preservation properties discussed in the previous section.
\begin{theorem}
Suppose that a given Runge-Kutta method preserves the measure $\mu$ on $\R^n$ when solving the ODE $\dot{x} = f(x)$. Then when solving the ODE $\dot{x} = \tilde{f}(x)$, where $\tilde{f}(x) = Pf(P^{-1}x)$ for some invertible matrix $P$, the method preserves the measure with density $\tilde\mu(x)=\mu(P^{-1}x)$.
\begin{proof}
By assumption, $\det(\phi_h^\prime(y))\mu(\phi_h(y)) = \mu(y)$ for all $y\in\R^n$. Using the notation and results of Lemma \ref{linearchangeofvariables}, $\det(\tilde{\phi}_h^\prime(x))\tilde{\mu}(\tilde\phi_h(x)) = \det(\phi_h^\prime(P^{-1}x))\mu(\phi_h(P^{-1}x)) = \mu(P^{-1}x) = \tilde\mu(x)$.
\end{proof}
\end{theorem}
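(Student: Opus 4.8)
The plan is to reduce everything to the defining relation for measure preservation, $\det(\phi'(x)) = \mu(x)/\mu(\phi(x))$, combined with the explicit conjugation formulas from Lemma \ref{linearchangeofvariables}. First I would record what the hypothesis gives: since the method preserves $\mu$ for $\dot{x} = f(x)$, we have $\det(\phi_h'(y)) = \mu(y)/\mu(\phi_h(y))$, or equivalently $\det(\phi_h'(y))\,\mu(\phi_h(y)) = \mu(y)$, for every $y \in \R^n$.

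Next I would invoke Lemma \ref{linearchangeofvariables}, which tells us that the RK map for $\tilde{f}$ satisfies $\tilde\phi_h(x) = P\phi_h(P^{-1}x)$ and $\tilde\phi_h'(x) = P\phi_h'(P^{-1}x)P^{-1}$. Taking determinants of the second identity, the conjugating factors cancel, so $\det(\tilde\phi_h'(x)) = \det(\phi_h'(P^{-1}x))$. I would then apply the hypothesis at the point $y = P^{-1}x$ to rewrite this determinant as $\mu(P^{-1}x)/\mu(\phi_h(P^{-1}x))$.

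The remaining step, and the only point requiring genuine care, is to identify the denominator $\mu(\phi_h(P^{-1}x))$ with $\tilde\mu(\tilde\phi_h(x))$. Using $\tilde\mu(z) = \mu(P^{-1}z)$ and the first formula of Lemma \ref{linearchangeofvariables}, one computes $\tilde\mu(\tilde\phi_h(x)) = \mu(P^{-1}\tilde\phi_h(x)) = \mu(P^{-1}P\phi_h(P^{-1}x)) = \mu(\phi_h(P^{-1}x))$. Since the numerator $\mu(P^{-1}x)$ equals $\tilde\mu(x)$ by definition, we obtain $\det(\tilde\phi_h'(x)) = \tilde\mu(x)/\tilde\mu(\tilde\phi_h(x))$, which is exactly the measure-preservation condition for $\tilde\phi_h$. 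The main obstacle is purely bookkeeping: correctly propagating the substitution $P^{-1}x$ through both the determinant and the density so that the image point $\tilde\phi_h(x)$ reassembles as $P\phi_h(P^{-1}x)$, allowing the $P$ factors to cancel in $\tilde\mu$.
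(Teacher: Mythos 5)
Your proposal is correct and follows essentially the same route as the paper: both reduce to the identity $\det(\tilde\phi_h'(x)) = \det(\phi_h'(P^{-1}x))$ from Lemma \ref{linearchangeofvariables}, apply the hypothesis at $y = P^{-1}x$, and use $\tilde\phi_h(x) = P\phi_h(P^{-1}x)$ to identify $\mu(\phi_h(P^{-1}x))$ with $\tilde\mu(\tilde\phi_h(x))$. Your version merely makes the bookkeeping more explicit than the paper's one-line chain of equalities.
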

%\pb{this now only holds for 1-stage}. 
\begin{theorem}
Suppose that a given 1-stage Runge-Kutta method preserves the measure $\rho dx$ on $\R^m$ when solving the ODE $\dot{x}=u(x)$, and it preserves the measure $\nu(y) dy$ on $\R^n$ when solving the ODE $\dot{y} = v(x,y)$ for all $x \in \R^m$. 
Then when solving the ODE $(\dot{x},\dot{y}) = (u(x),v(x,y))$, the method preserves the product measure $\mu(x,y) dxdy = \rho(x)\nu(y)dx dy$ on $\R^{n+m}$. 
\begin{proof}
By Lemma \ref{foliationlemma}, {$\phi_h(x,y) = (\psi_h(x),\chi_h(k_1(x),y))^\top$}, where $k_1(x)$ is the internal stage of the 1-stage method. Hence by the definition of $\mu$,
\begin{equation*}
\mu(\phi_h(x,y)) = \rho(\psi_h(x))\nu(\chi_h(k_1(x),y)).
\end{equation*}
By assumption, we have for all $x$ and $y$,
\begin{equation}
\det(\psi_h^\prime(x))\rho(\psi_h(x)) = \rho(x), \quad \det(\partial_y \chi_h(x,y))\nu(\chi_h(x,y)) = \nu(y).
\end{equation}
Finally, Lemma \ref{foliationlemma} gives us $\det(\phi_h'(k_1(x),y)) = \det(\psi_h'(x))\det(\partial_y\chi_h(x,y))$. Combining all of these results,
\begin{align*}
\det(\phi_h^\prime(x,y))\mu(\phi_h(x,y)) &= \det(\psi_h^\prime(x))\rho(\phi_h(x))\det(\partial_y\chi_h(k_1(x),y))\nu(\chi_h(k_1(x),y)) \\
                                                                &= \rho(x)\nu(y)\\
                                                                &= \mu(x,y).
\end{align*}
Hence the measure with density $\mu$ on $\R^{n+m}$ is conserved.
\end{proof}
\end{theorem}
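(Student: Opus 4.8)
The plan is to verify the product-measure identity directly, reducing it to the two hypotheses by exploiting the block structure of the 1-stage Runge-Kutta map established in Lemma~\ref{foliationlemma}. Recall from the criterion stated above that a map $\phi$ preserves $\mu(x)dx$ precisely when $\det(\phi'(x))\,\mu(\phi(x)) = \mu(x)$; the goal is therefore to confirm this identity for the product density $\mu(x,y) = \rho(x)\nu(y)$ and the foliated map $\phi_h$.

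First I would invoke Lemma~\ref{foliationlemma}, which supplies two facts at once: the map factors as $\phi_h(x,y) = (\psi_h(x), \chi_h(k_1(x),y))^\top$, where $k_1(x)$ is the internal stage of the $x$-component method, and its Jacobian determinant factors as $\det(\phi_h'(x,y)) = \det(\psi_h'(x))\,\det(\partial_y\chi_h(k_1(x),y))$. Substituting the factored map into the product density then gives $\mu(\phi_h(x,y)) = \rho(\psi_h(x))\,\nu(\chi_h(k_1(x),y))$.

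Next I would multiply these expressions and regroup the four factors into two pairs, one built from the $\psi_h$/$\rho$ data and one from the $\chi_h$/$\nu$ data:
\[
\det(\phi_h'(x,y))\,\mu(\phi_h(x,y)) = \bigl[\det(\psi_h'(x))\rho(\psi_h(x))\bigr]\,\bigl[\det(\partial_y\chi_h(k_1(x),y))\nu(\chi_h(k_1(x),y))\bigr].
\]
The first bracket equals $\rho(x)$ by the hypothesis that the method preserves $\rho\,dx$ for $\dot x = u(x)$, and the second equals $\nu(y)$ by the hypothesis for $\dot y = v(x,y)$, applied at the parameter value $k_1(x)$. The product is then $\rho(x)\nu(y) = \mu(x,y)$, which is the desired identity.

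The only conceptual subtlety, and what I would flag as the main obstacle, lies in the second bracket: the hypothesis on $v$ asserts preservation of $\nu\,dy$ for \emph{every} fixed parameter value $x$, and the argument uses this uniformity at the specific value $k_1(x)$. This step relies on $\chi_h$ treating its first slot as an inert parameter, which is exactly what Lemma~\ref{foliationlemma} guarantees for a \emph{1-stage} method --- and is precisely where the single-stage restriction is genuinely needed, since for multistage methods the coupling between stages would spoil the clean factorisation.
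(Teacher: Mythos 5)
Your proposal is correct and follows essentially the same route as the paper's own proof: invoke Lemma~\ref{foliationlemma} for the factored form of the map and its Jacobian determinant, substitute into the product density, and apply the two preservation hypotheses (the second at the parameter value $k_1(x)$). Your closing remark about why the hypothesis must hold uniformly in the parameter, and why the 1-stage restriction matters, is a sensible observation that the paper leaves implicit.
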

From the results of Lemma~\ref{foliationsum} and using its notation, we deduce that a generalization for measure preserving RK methods with more stages even for sums $f(x,y)=(u(x),w(x)+v(y))^\top$ is not trivial since then,
%\tag{\text{assumption 1}}  \det(\psi_h'(x))\rho(\psi_h(x))&=\rho(x)\\
%\tag{\text{assumption 2}}  \det(\partial_y\chi_h(c,y))\nu(\chi_h(c,y))&=\nu(y), \ \forall c\\
\[\det(\phi_h'(x,y))=\det(\psi_h'(x))\det(\partial_y\chi_h(d(x),y+he(x))),\]
and a product measure $\mu(x,y)dxdy=\rho(x)\nu(y)dxdy$ transforms according to
\begin{align*}
\det(\phi_h'(x,y))\mu(\phi_h(x,y))&=\det(\phi_h'(x,y))\rho(\psi_h(x))\  \nu(\chi_h(d(x),y+he(x)) + h c(x))\\
&= \det(\psi_h'(x))\rho(\psi_h(x))\cdot\det\Bigl(\partial_y\chi_h(d(x),y+he(x)) \Bigr)\ \nu\Bigl(\chi_h(d(x),y+he(x)) + h c(x)\Bigr).
%\\
%&= \rho(x) \ \det\Bigl(\partial_y\chi_h(d(x),y+he(x))\Bigr)\nu\Bigl(\chi_h\left(d\left(x\right),y+he(x)\right) + h c(x)\Bigr).
\end{align*}
Assume that $\psi_h,\chi_h$ preserve the measures with densities $\rho(x)$ and $\nu(y)$, respectively, then, 
if $c_h=e_h=0$, the product measure is preserved.
This additional condition holds, e.g., for the trapezoidal rule for which we get that $d_h(x)=(w(k_1)+w(k_2))/2$. Further methods satisfying $c_h=e_h=0$ can be constructed easily\footnote{Let, e.g., $a_{1j}=0$ and $a_{ij}=b_j$ for some $i$ and all $j$.} but they might preserve measures for trivial vector fields only.
Kahan's method derived from Lemma~\ref{lemmaKahanRK} does not simplify in this way, however, we can give the following result:
\begin{theorem}\label{kahanfoliation}
Generalized Kahan's methods from Lemma~\ref{lemmaKahanRK} preserve the measure $\mu(x,y)dxdy$ with $\mu(x,y)=\det(I+\frac{h}{2}f'(x,y))^{-1}$ for linearly foliate vector fields of the form $f(x,y)=(u(x),v(y)+w(x))^\top$ where $w$ is arbitrary, and $u,v\in\mathcal{D}$ are quadratic.
\begin{proof}
Let $z=(x,y)$, and write $\phi_h(z)=(\psi_h(x),\sigma_h(x,y))^\top$. We compute the Jacobian determinant of \eqref{kahanRK}
%$\phi_h(z) = x + h\sum_{i=1}^s b_i f(Z_i)$, , $v'=\partial_y v$
\begin{align*}\nonumber
\det(\phi_h'(z)) &=\frac{\det(I + h \sum_{i=1}^{N}b_i(1-c_i)f'(z+c_i(\phi_h(z)-z)))}{\det(I - h \sum_{i=1}^{N}b_ic_if'(z+c_i(\phi_h(z)-z))))}
\intertext{using that $f'$ is block-diagonal, we arrive at}\nonumber
%&=\frac{\det(I + h \sum_{i=1}^{N}b_i(1-c_i)u'(X_i))}{\det(I - h \sum_{i=1}^{N}b_ic_iu'(X_i))}\\
&=\frac{\det(I + h \sum_{i=1}^{N}b_i(1-c_i)u'(x + c_i(\psi_h(x)-x)))}{\det(I - h \sum_{i=1}^{N}b_ic_iu'(x + c_i(\psi_h(x)-x)))}
\frac{\det(I + h \sum_{i=1}^{N}b_i(1-c_i)v'(y + c_i(\sigma_h(x,y)-y)))}{\det(I - h \sum_{i=1}^{N}b_ic_iv'(y + c_i(\sigma_h(x,y)-y)))}\\
\intertext{and since $u',v'$ are affine, we can simplify using the assumptions on the coefficients from Lemma~\ref{lemmaKahanRK} to}
\nonumber
&=\frac{\det(I + \frac{h}{2} u'(\psi_h(x)))}{\det(I - \frac{h}{2} u'(x)) }\frac{\det(I + \frac{h}{2} v'(\sigma_h(x,y)))}{\det(I - \frac{h}{2} v'(y))}
=
\frac{\det(I + \frac{h}{2} f'(\phi_h(x,y)))}{\det(I - \frac{h}{2} f'(x,y))}
= \frac{\mu(z)}{\mu(\phi_h(z))}.
\end{align*}
\end{proof}
\end{theorem}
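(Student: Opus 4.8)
The plan is to compute $\det(\phi_h'(z))$ directly for $z=(x,y)$ from the one-parameter Runge--Kutta form \eqref{kahanRK} and to match it against $\mu(z)/\mu(\phi_h(z))$. First I would differentiate \eqref{kahanRK} implicitly. Writing $\xi_i = z + c_i(\phi_h(z)-z)$ for the stage arguments, the chain rule gives $\phi_h'(z) = I + h\sum_i b_i f'(\xi_i)\bigl((1-c_i)I + c_i\phi_h'(z)\bigr)$, which I would rearrange as $\bigl(I - h\sum_i b_i c_i f'(\xi_i)\bigr)\phi_h'(z) = I + h\sum_i b_i(1-c_i)f'(\xi_i)$. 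Taking determinants then yields
\[
\det(\phi_h'(z)) = \frac{\det\bigl(I + h\sum_i b_i(1-c_i)f'(\xi_i)\bigr)}{\det\bigl(I - h\sum_i b_i c_i f'(\xi_i)\bigr)},
\]
in the same spirit as the earlier Kahan Jacobian computation.

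Next I would use the foliated structure. Since $f(x,y)=(u(x),v(y)+w(x))^\top$, the Jacobian $f'$ is block lower-triangular with diagonal blocks $u'(x)$ and $v'(y)$, and the $x$-component of \eqref{kahanRK} closes on its own, so $\phi_h(x,y)=(\psi_h(x),\sigma_h(x,y))$ with $\psi_h$ the same generalized Kahan map for $\dot{x}=u(x)$. Determinants of block-triangular matrices depend only on the diagonal blocks, so both the numerator and the denominator above factor into a $u'$-part evaluated at the arguments $x+c_i(\psi_h(x)-x)$ and a $v'$-part evaluated at $y+c_i(\sigma_h(x,y)-y)$; crucially the coupling block $w'$ never enters either determinant, which is precisely why $w$ may be left arbitrary.

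The crux, where I expect the real work to lie, is collapsing these weighted stage-sums. Because $u$ and $v$ are quadratic, $u'$ and $v'$ are affine, so $u'(x+c_i(\psi_h(x)-x)) = (1-c_i)u'(x)+c_i u'(\psi_h(x))$ and similarly for $v'$. Substituting this affine interpolation and invoking the moment conditions $\sum_i b_i=1$, $\sum_i b_ic_i=\tfrac12$, $\sum_i b_ic_i^2=0$ from Lemma~\ref{lemmaKahanRK}, the combinations $\sum_i b_i(1-c_i)^2=0$, $\sum_i b_ic_i(1-c_i)=\tfrac12$ and $\sum_i b_ic_i^2=0$ make the numerator's $u$-block collapse to $I+\tfrac{h}{2}u'(\psi_h(x))$ and the denominator's $u$-block to $I-\tfrac{h}{2}u'(x)$, with the analogous collapse for $v$. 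This reduces $\det(\phi_h'(z))$ to the product of $\det(I+\tfrac{h}{2}u'(\psi_h(x)))/\det(I-\tfrac{h}{2}u'(x))$ and $\det(I+\tfrac{h}{2}v'(\sigma_h(x,y)))/\det(I-\tfrac{h}{2}v'(y))$.

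Finally I would close the remaining sign discrepancy using $u,v\in\mathcal{D}$: the determinant condition gives $\det(I-\tfrac{h}{2}u'(x))=\det(I+\tfrac{h}{2}u'(x))$ and $\det(I-\tfrac{h}{2}v'(y))=\det(I+\tfrac{h}{2}v'(y))$. Recombining the two quotients through block-triangularity and recalling that $\mu=\det(I+\tfrac{h}{2}f')^{-1}$, I obtain $\det(\phi_h'(z)) = \det(I+\tfrac{h}{2}f'(\phi_h(z)))/\det(I+\tfrac{h}{2}f'(z)) = \mu(z)/\mu(\phi_h(z))$, which is the measure-preservation identity.
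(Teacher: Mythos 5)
Your proposal is correct and follows essentially the same route as the paper: implicit differentiation of the generalized Kahan form to get the ratio of stage-sum determinants, factorization via the (block lower-triangular) structure of $f'$, collapse of the weighted sums using affineness of $u',v'$ together with the moment conditions, and the determinant condition for $u,v\in\mathcal{D}$ to identify the result with $\mu(z)/\mu(\phi_h(z))$. If anything, you are slightly more careful than the paper in spelling out the coefficient identities $\sum_i b_i(1-c_i)^2=0$, $\sum_i b_ic_i(1-c_i)=\tfrac12$ and in making explicit the final sign-flip via the determinant condition, which the paper leaves implicit.
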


\begin{remark}
The theorem is not true for more general foliate vector fields within the class $\mathcal{F}^{(\infty)}$, e.g., $f( x,y)=(u(x),J^{-1}\nabla_y H(x,y))^\top$ where $u(x)$ is a simple harmonic oscillator and with the Hamiltonian $H(x,y)=(p_xq_x) p_y q_y$ using the usual notation for the momentum and position coordinates $x=(q_x,p_x)$, $y=(q_y,p_y)$. Note that the Hamiltonian is still quadratic in $y$!
\end{remark}

\section*{Acknowledgements}
The authors would like to thank Robert McLachlan for stimulating discussions and suggestions. This research was supported by a Marie Curie International Research Staff Exchange Scheme Fellowship within the 7th European Community Framework Programme; by the Australian Research Council; and by the UK EPSRC grant EP/H023348/1 for the Cambridge Centre for Analysis.

\section*{References}
\bibliographystyle{model1-num-names}
\bibliography{VPRKbib}

\end{document}